\documentclass[a4paper,11pt]{amsart}
\usepackage[english]{babel}

\usepackage{amsmath,amscd,amssymb,amsthm,amsxtra,enumerate}
\usepackage[foot]{amsaddr}
\usepackage{mathrsfs}
\usepackage{mathtools}
\usepackage{tikz}
\usepackage{color,graphicx}
\usepackage{epsfig,epstopdf}
\usepackage[margin=1.25in]{geometry}
\usepackage{enumerate}
\usepackage{hyperref}
\usepackage{subcaption}
\usepackage{wrapfig}
\usepackage{subcaption}
\usetikzlibrary{patterns}
\usetikzlibrary{decorations.pathreplacing}


\newcommand{\norm}[1]{\left\Vert#1\right\Vert}

\newcommand{\R}{\mathbb{R}}
\newcommand{\re}{\mathbb{R}}

\newcommand{\N}{\mathbb{N}}

\renewcommand{\H}{\mathcal{H}}

\newcommand{\xh}{\widehat{x}}

\newcommand\reallywidehat[1]{\arraycolsep=0pt\relax%
\begin{array}{c}
\stretchto{
  \scaleto{
    \scalerel*[\widthof{\ensuremath{#1}}]{\kern-.5pt\bigwedge\kern-.5pt}
    {\rule[-\textheight/2]{1ex}{\textheight}} 
  }{\textheight} %
}{0.5ex}\\           
#1\\                 
\rule{-1ex}{0ex}
\end{array}
}

\newcommand{\qqquad}{\qquad\qquad}
\newcommand{\qqqquad}{\qqquad\qqquad}

\newcommand{\mcl}{\mathcal{L}}

\newcommand{\ux}{\underline{x}}
\newcommand{\ox}{\overline{x}}
\newcommand{\pinv}{\pi^{\text{inv}}}

\newcommand{\ROmCl}{R^+_\lambda(\overline{\Omega})}

\newcommand{\ROmClk}{R^+_{\lambda_k}(\overline{\Omega})}
\newcommand{\ROmClkl}{R^+_{\lambda_{k_l}}(\overline{\Omega})}

\newcommand{\tx}{\Tilde{x}}
\newcommand{\ROmlO}{R^+_0(\overline{\Omega})}

\newcommand{\RPmL}{R^+_{\lambda}(\partial \Omega)}

\newcommand{\HP}{\text{HP}}

\DeclareMathOperator{\diam}{diam}

\newtheorem{thm}{Theorem}[section]

\newtheorem{lem}[thm]{Lemma}

\theoremstyle{definition}
\newtheorem{defn}[thm]{Definition}
\newtheorem{rem}[thm]{Remark}

\newtheorem{assum}[thm]{Assumption}

\numberwithin{equation}{section}

\allowdisplaybreaks

\author[A. Biswas]{Animesh Biswas}

\address{Department of Mathematics \\
{Missouri State University} \\
901 S. National Ave, Springfield\\
MO 65897, USA}
\email{ab7e@missouristate.edu}

\author[M. D. Foss]{Mikil D. Foss}
\address{Department of Mathematics \\
{University of Nebraska-Lincoln} \\
203 Avery Hall, Lincoln\\
NE 68588, USA}
\email{mikil.foss@unl.edu}

\author[P. Radu]{Petronela Radu}
\address{Department of Mathematics \\
{University of Nebraska-Lincoln} \\
203 Avery Hall, Lincoln\\
NE 68588, USA}

\email{pradu@unl.edu}

\thanks{M.D.F.'s and P.R.'s work was supported by the awards NSF-DMS 1716790 and NSF-DMS 2109149.}

\begin{document}

\title[Nonlocal Curvature]{Nonlocal Ordered Mean Curvature with Integrable Kernel}
\maketitle


\begin{abstract}
In this paper we introduce and study the concept of nonlocal ordered curvature. In the classical (differential) setting, the problem was introduced by Nirenberg and Li, where they conjectured that if a bounded, smooth surface has its mean curvature ordered in a particular direction, then the surface must be symmetric with respect to some hyperplane orthogonal to that direction. The conjecture was proved by Li et al in 2022. Here we study the counterpart problem in the nonlocal setting, where the nonlocal mean curvature of a set $\Omega$, at any point $x$ on its boundary, is defined as $H_\Omega^J(x) = \int_{\Omega^c} J(x-y) dy - \int_\Omega J(x-y) dy$ and the kernel function $J$ is radially symmetric, non-increasing, integrable and compactly supported. Using a generalization of Alexandrov's moving plane method, we prove a similar result in the nonlocal setting.  
\end{abstract}

\keywords{Nonlocal mean curvature, ordered curvature, Alexandrov's moving plane method, integrable kernel of interaction, finite horizon.}

\providecommand{\subjclass}[1]
{
  \small	
  {\textit{MSC2010:}} #1
}

\subjclass{31B10, 45G05, 49Q05, 53A10}

\section{Introduction}

Curvature is a fundamental concept in physics and science, as it plays a crucial role in various areas such as classical mechanics, general relativity, optics, image processing, fluid dynamics. In particular, the curvature of surfaces can affect the mechanical, electrical, and optical properties of materials, so curvature effects need to be taken into account when designing and predicting behavior of new materials. The recently introduced concept of nonlocal curvature provides a framework for measuring the bending of a surface under little or no smoothness assumptions, while connecting to classical curvature as the horizon of interaction converges to zero.

 In this paper we consider {\it the ordered nonlocal mean curvature problem}, which is the study of surfaces for which the nonlocal mean curvature is monotonically non-decreasing in a particular direction. For a compact, connected, hypersurface $M$ embedded in $\R^n$, we define the classical (or local) mean curvature by $H(x) = \displaystyle\frac{1}{n-1} \sum_{i=1}^{n-1} k_i (x)$, where $k_i(x)$ are the principle curvatures of $M$ at the point $x$ in the $x_i$ direction.  

The concept of ordered curvature, in the classical/differential framework, was motivated by the constant mean curvature problem studied by Alexandrov \cite{Alexandrov}. In \cite{Li-1} Li first proved that if the mean curvature $H:M \to \R$, has a $C^1$-extension $F:\R^n \to \R$ such that $F$ has a non-positive partial derivative in a particular direction, say $x_n$ (meaning that $H$ is non-increasing in the $x_n$ component), then $M$ is symmetric with respect to some hyperplane $x_n= c$. In \cite{Li-2, Li-3} Li and Nirenberg studied the similar problem, under a weaker  assumption, namely
 \begin{assum} \label{assumption1}
     We write $x=(\xh, x_n)\in\R^n$. Let $\Omega$ be an open bounded set in $\R^n$, bounded by the hypersurface $M$, i.e $M = \partial \Omega$.  Then for any two points $x=(\xh, x_n), y=(\widehat{y}, y_n)\in M$, with $\xh = \widehat{y}$ and $x_n<y_n$, if $\{\xh, \theta x_n + (1-\theta) y_n \} \in \Omega$ for $0<\theta<1$, then we find $H(x) \leq H(y)$.
 \end{assum}
 Note that this ordered mean curvature assumption requires the surface to be at least twice-differentiable.  In \cite{Li-2, Li-3}, the authors showed that this condition is not sufficient to prove the set to be symmetric by providing a counterexample. However, they proved that the result holds true for one-dimensional surfaces, provided $M$ stays to one side of some tangent hyperplane parallel to $x_n$ \cite{Li-2}. In higher dimensions, additional conditions regarding a local convexity of the surface and the size of its tangential intersection with a $x_n=c$ plane were needed to obtain the symmetry of the set \cite{Li-3} with respect to a hyperplane $x_n = c$. In \cite{Li-4} Li together with Yan and Yao further generalized the results proving that a compact $C^2$ hypersurface is symmetric with respect to a hyperplane $x_n =c$ if it satisfies the ordered curvature condition and the following assumption:
 \begin{assum}
     There exists some constant $\delta>0$ such  that for every $x=(\xh, x_n) \in \partial \Omega$ with a horizontal unit outer normal $N=\langle \widehat{N}, 0 \rangle$, the vertical cylinder $|\xh - (\xh + \delta {N})| = \delta$ has an empty intersection with $\Omega$. 
 \end{assum}

 The above results have been obtained in the differential curvature setting. With the introduction and growing interest in nonlocal theories, counterparts to different geometrical concepts were introduced, in particular, the nonlocal perimeter and curvature. A nonlocal version of mean curvature was introduced by Caffarelli, Roquejofre, and Savin in 2009 in \cite{Caffa-Roq-Savin}. In this seminal paper, the authors defined the nonlocal curvature associated with a set $\Omega$, at {\it any} point $x \in \R^n$ as 
\begin{align}\label{eq:nlc_caffa}
    H_\Omega^s(x): = \int_{\re^n}\frac{\chi_{\Omega^c}(y)-\chi_{\Omega}(y)}{|x-y|^{n+2s}} \, dy,
\end{align}
where $\chi_A$ denotes the characteristic function associated with a set $A$. First, note that the above integral is well-defined by considering the principal value for sets $\Omega$ with sufficiently smooth boundaries. Second, we see that the concept can be defined at points $x$ which are not necessarily on the boundary. Following the introduction of this new geometric measure, the paper \cite{Caffa-Roq-Savin} also motivated several research problems including the constant mean curvature and the nonlocal minimal surface problems. Shortly after, the constant mean curvature problem in this nonlocal setting was solved in \cite{Cabre-Fall-Sola-Weth, Figalli}, showing that a (sufficiently smooth) surface of constant nonlocal mean curvature must be a ball. 

In the same spirit and motivated by similar geometric aspects, as well as by a desire to remove regularity assumptions for the surface considered, a different variation of the nonlocal curvature was considered, where the (highly) singular kernel $\displaystyle\frac{1}{|x|^{n+2s}}$ was replaced by an integrable non-negative kernel $J(x)$ supported inside a ball of radius $r$ (which could be infinite) - see \cite{curvaturepaper}. Very recently, Bucur and Fragala studied the constant curvature problem for such kernels in \cite{Bucur1, Bucur2}, and they found the solution to be a union of disjoint balls situated at a distance larger than $r$. 

Whether the kernel is non-integrable or integrable drives very different analytical arguments. In the case of highly singular kernels, one needs a minimum regularity assumption to be imposed on the surface; usually, $\partial \Omega$ must be $C^{1,\alpha}$ with $\alpha$ dependent on $s$, the order of the fractional curvature. On the other hand, for integrable kernels, it is sufficient for the set $\Omega$ to be measurable. In fact, the solution of the constant curvature problem in \cite{Bucur1, Bucur2} assumes the set to only be measurable and satisfy a certain nondeneracy condition. More precisely, they defined a set to be nondegenerate if 
\begin{align}\label{eq:non_dege_J}
    \inf_{x_1, x_2 \in \partial^* \Omega} \frac{\int_{\Omega} |J(x_1-y) - J(x_2-y)| \, dy }{\norm{x_1 - x_2}} > 0,
\end{align}
where $\partial^* \Omega$ is the essential boundary of the set $\Omega$.

With an eye towards considering general domains, we are interested here in studying the ordered nonlocal curvature in the integrable kernel setting. For simplicity, we consider kernels which are also bounded on the support set. A prototypical example of this kernel would be
\begin{equation}\label{eq:exam_kernel}
   z\mapsto {\chi_{B_{r}}(z)},
\end{equation}
for some $r>0$. Here $B_{r}\subseteq\re^n$ is the open ball with radius $r$ centered at the origin. More generally, we make the following
\begin{assum}\label{assum_monotonicty} The integrable kernel $J\in L^1(\re^n)$ satisfies the (J1 - J3) below and either (J4a) or (J4b):
\begin{itemize}
    \item[(J1)] \textbf{rotational symmetry:} there exists $\mu:[0,\infty)\to [0, \infty)$ such that $J(z)=\mu(|z|)$ for all $z\neq0$;
    \item[(J2)] \textbf{compact support:} there exists $0<r<\infty$ such that $\mu(\rho)=0$ for all $\rho\ge r$;
    \item[(J3)] \textbf{radially non-increasing:} if $0<\rho_2<\rho_1\le r$, then $\mu(\rho_1)\leq \mu(\rho_2)$.
\end{itemize}
Additionally, we require that the kernel $J$ be bounded. More specifically, we further assume that it satisfies one of the following cases (each of them would yield boundedness):
\begin{enumerate}
    \item[(J4a)] $J(z) = \chi_{B_r}(z)$
    \item[(J4b)] $\mu(\rho)$ is continuously differentiable on $[0,r],\, \mu(r)=0,$ and $\mu'(\rho)<0$ for all $0<\rho<r$.
\end{enumerate}
\end{assum}
Given a measurable set $\Omega \subseteq \re^n$, we define its nonlocal perimeter (with kernel $J$; see \cite{curvaturepaper}) as
\begin{equation}\label{eq:non_peri}
 P^J(\Omega): = \int_{\Omega} \int_{\Omega^c} J(x-y) dx\, dy,   
\end{equation}
and the nonlocal curvature at $x \in \re^n$ as
\begin{equation}\label{curvat}
    H^J_{\Omega}(x)
    :=\int_{\re ^n} J(x-y)(\chi_{\Omega^c}(y) - \chi_{\Omega}(y)) \, dy
    =\int_{\re ^n} J(x-y)\tau_\Omega(y)dy,
\end{equation}
where $\tau_\Omega = \chi_{\Omega^c}- \chi_{\Omega}$. With a kernel given by $J(x) = \chi_{B_r}(x)$, we see that the nonlocal curvature is simply given by $H^J_\Omega (x) = |B_r(x) \cap \Omega^c| - |B_r(x) \cap \Omega|$. In other words, the nonlocal curvature at a point $x$ is measured by the difference in volumes of the ball outside the set $\Omega$ and respectively, inside the set $\Omega$ (see Figure \ref{fig:nonlocal_curv}).
\begin{figure}
\centering
\begin{tikzpicture}[scale=6.05]
\draw[-latex,black] (-.65,-.3)--(-.65,.45) node[above]{$x_n$};
\draw[fill=blue,opacity=.2] (-.3,.3)--(.1,.3)--(0,0)--(.45,.15)--(.2,.42)--(.15,-.14)--(-.3,-.3);
\draw[blue] (-.3,.3)--(.1,.3)--(0,0)--(.45,.15)--(.2,.42)--(.15,-.14)--(-.3,-.3)--(-.3,.3);
\draw [red, fill=red!50,opacity=.3] (0,0) circle (0.34 cm);
\begin{scope}
    \clip (-.3,.3)--(.1,.3)--(0,0)--(.45,.15)--(.2,.42)--(.15,-.14)--(-.3,-.3)--(-.3,.3);
    \draw [red, fill=blue!40!green,opacity=.5] (0,0) circle (0.34 cm);
\end{scope}
\node at (.34,.2){$\Omega$};
\draw[fill=black] (0,0) circle(.007) node[below] {$x$};
\draw[black,thick] (-.3,-.3)  to node[left,xshift=-3.9] {\tiny\parbox{46pt}{lateral\\[-2pt] boundary\\[-2pt]points (P3)}} (-.3,.3);
\draw[thick,black,decorate,decoration=brace,xshift=-1.8] (-.3,-.3)--(-.3,.3);
\draw[fill=black] (.325,.285) circle (.007);
\node[black,above right] at (.325,.285) {\tiny\parbox{50pt}{top boundary \\[-2pt]point (P1)}};
\draw[fill=black] (.45,.15) circle (.007);
\node[black,right] at (.45,.15) {\tiny\parbox{65pt}{isolated boundary\\[-2pt]point (P4)} };
\draw[fill=black] (.225,.075) circle (.007);
\draw[stealth-] (.24,.065)--(.4,0) node[black,right]{\tiny\parbox{65pt}{bottom boundary\\[-2pt]point (P2)} };
\end{tikzpicture}
\caption{Types of boundary points (P1 - P4). The nonlocal curvature, $H^J_\Omega(x)$, is the difference between the $J$-weighted ``outer" volume (red) and its $J$-weighted ``inner" volume (green).}
\label{fig:nonlocal_curv}
\end{figure}
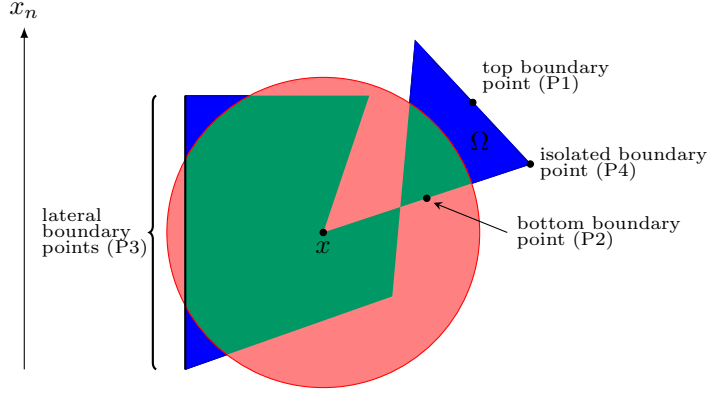

\begin{figure}
\centering
\begin{tikzpicture}[scale=.9]
    \draw[-latex,black,thick] (-.8,-.5)--(-.8,5) node[above]{$x_n$};
    \draw[thick,black] (-.5,-.9)--(0,-.9);
    \draw[blue,dashed] (0,-.8)--(0,0);
    \draw[blue,dashed] (7,-.8)--(7,0);
    \draw[thick,black,-latex] (7,-.9)--(8,-.9) node[right]{$\re^{n-1}$};
    \draw[fill=blue!70,opacity=.2]
    (0,0).. controls +(left:0cm) and +(left:0cm) .. (0,1)
    .. controls +(right:0cm) and +(right:0cm) .. (3,5)
    .. controls +(right:1cm) and +(up:1.5cm) .. (6,3)
    .. controls +(down:.25cm) and +(right:.5cm) .. (4,2)
    .. controls +(right:0cm) and +(right:0cm) .. (7,0)
    .. controls +(left:0cm) and  +(right:0cm) .. (0,0);
    \draw[blue,line width=.07,thick]
    (0,0).. controls +(left:0cm) and +(left:0cm) .. (0,1)
    .. controls +(right:0cm) and +(right:0cm) .. (3,5)
    .. controls +(right:1cm) and +(up:1.5cm) .. (6,3)
    .. controls +(down:.25cm) and +(right:.5cm) .. (4,2)
    .. controls +(right:0cm) and +(right:0cm) .. (7,0)
    .. controls +(left:0cm) and  +(right:0cm) .. (0,0);
    \draw[black,dashed,opacity=.5] (5,5.1)--(5,-1);
    \node[below,yshift=.3] at (5,-1) {$\widehat{x}$};
    \node at(1.5,.5){$\Omega$};
    
    \draw[red,fill=red!50,opacity=.3] (5,0)circle(.5cm);
    \draw[red,fill=red!50,opacity=.3] (5,1.33)circle(.5cm);
    \draw[red,fill=red!50,opacity=.3] (5,2.31)circle(.5cm);
    \draw[red,fill=red!50,opacity=.3] (5,4.5)circle(.5cm);
    \begin{scope}
        \clip  (0,0).. controls +(left:0cm) and +(left:0cm) .. (0,1)
    .. controls +(right:0cm) and +(right:0cm) .. (3,5)
    .. controls +(right:1cm) and +(up:1.5cm) .. (6,3)
    .. controls +(down:.25cm) and +(right:.5cm) .. (4,2)
    .. controls +(right:0cm) and +(right:0cm) .. (7,0)
    .. controls +(left:0cm) and  +(right:0cm) .. (0,0);
        \draw[blue,fill=blue!40!green,opacity=.5] (5,0)circle(.5cm);
        \draw[blue,fill=blue!40!green,opacity=.5] (5,1.33)circle(.5cm);
        \draw[blue,fill=blue!40!green,opacity=.5] (5,2.31)circle(.5cm);
        \draw[blue,fill=blue!40!green,opacity=.5] (5,4.5)circle(.5cm);
    \end{scope}
    \node[above left, style={scale=.9}] at(5.05,0){\tiny$A^1$};
    \node[below left, style={scale=.9}] at(5.05,1.4){\tiny$B^1$};
    \node[above left, style={scale=.9}] at(5.05,2.27){\tiny$A^2$};
    \node[below left, style={scale=.9}] at(5.05,4.6){\tiny$B^2$};
    \draw[dashed,thin,opacity=.5] (5,0)--(-1,0) node[left,opacity=1]{$a^1$};
    \draw[dashed,thin,opacity=.5] (5,1.33)--(-1,1.33) node[left,opacity=1]{$b^1$};
    \draw[dashed,thin,opacity=.5] (5,2.31)--(-1,2.33) node[left,opacity=1]{$a^2$};
    \draw[dashed,thin,opacity=.5] (5,4.5)--(-1,4.5) node[left,opacity=1]{$b^2$};
    \draw[fill=black] (5,0)circle(.04);
    \draw[fill=black] (5,1.33)circle(.04);
    \draw[fill=black] (5,2.31)circle(.04);
    \draw[fill=black] (5,4.5)circle(.04);
    \draw[thick,blue] (0,-.9)--(7,-.9) node[midway,below,yshift=-2.4,black]{$R=\pi(\overline{\Omega})$};    \draw[thick,blue,decorate,decoration=brace,yshift=-2.5] (7,-.9)--(0,-.9);
\end{tikzpicture}  
\caption{The nonlocal curvature is ordered in the $x_n$-direction: $[a^1,b^1],[a^2,b^2]\subseteq\pinv_{\overline{\Omega}}(\xh)$, so $H^J_\Omega(A^1)\le H^J_\Omega(B^1)$ and $H_\Omega^J(A^2)\le H_\Omega^J(B^2)$.}
\label{fig:ordered_curvature}
\end{figure}
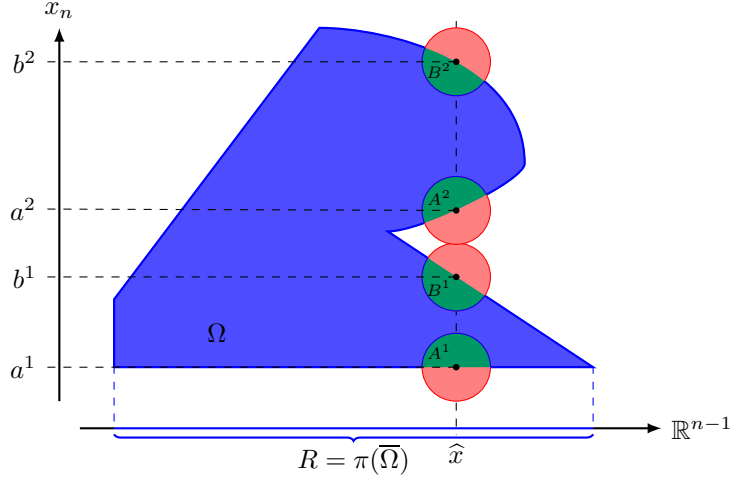
As these definitions only require the set $\Omega$ to be measurable, they are applicable to very general domains which could be quite singular. Thus the concepts of nonlocal perimeter and curvature are amenable to the study of rough images and objects, which can be found in a variety of applications, e.g. fracture and singular deformations (see \cite{suo2012new} for a method of using curvature to predict fracture). Nonlocal models in general, have been successfully employed in a variety of applications, and the mathematical theory behind them is seeing an explosive growth (\cite{B-C-F-R, BDS, B-S, blanccharro2022asymptotic, F-R-Y, H-B-A}). Similar to the classical (differential) theory, it has been shown that the nonlocal curvature is obtained by minimizing the nonlocal perimeter functional. More specifically, nonlocal minimal sets have zero nonlocal curvature (see Definition 3.3 and Theorem 3.4 in \cite{curvaturepaper}).

Before listing the assumptions on the set $\Omega$, we need to introduce some additional notation. Let $\pi:\R^n\ni(\xh,x_n)\mapsto\xh\in\R^{n-1}$ denote the projection map. Given a set $E\subseteq\R^n$ and $\xh\in\R^{n-1}$, define the set
\[
    \pinv_E(\xh):=\{x_n:(\xh,x_n)\in E\}.
\]
We will also define $R=\pi(\overline{\Omega})$. In this paper, we will work with sets which satisfy the following:
\begin{assum}\label{assum_domain}
Let $\Omega$ be an open bounded connected set such that:

\begin{enumerate}
    \item[($\Omega1)$] {\bf Monotone nonlocal $J-$curvature condition:} For any $x=(\xh,x_n),y=(\widehat{y},y_n) \in \partial \Omega$,  satisfying $\xh=\widehat{y}$, $x_n < y_n$ and $\theta x + (1 - \theta)y \in \overline{\Omega}$ for all $0 \leq \theta \leq 1$,  we assume
    $$H^J_\Omega(x) \leq H^J_\Omega(y).$$
    See Figure~\ref{fig:ordered_curvature} for an illustration of this condition at a fixed projection point $\widehat{x}$. Note that the condition must be satisfied by pairs of endpoints of line segments within $\overline{\Omega}$.
    \item [($\Omega2)$] {\bf Zero boundary measure:}  The boundary $\partial \Omega$ has zero $\mathcal{H}^n$-measure.
    \item [($\Omega3)$] {\bf Countable interval projections:} We assume that for each $\xh \in R$, the set $\pinv_{\overline{\Omega}}(\xh)\subseteq\re$ is a countable union of closed intervals. 
 
    \item [($\Omega4)$] {\bf Nondegeneracy of the boundary/large diameter:} Finally, in the case of the constant kernel $J(z) = \chi_{B_r}(z)$ we also need the following nondegeneracy condition: for any $x \in \partial \Omega$, we require $\mathcal{H}^{n-1}(\partial B_r(x) \cap \Omega) >0$. As $\Omega$ is connected, it is sufficient to assume that $\diam(\Omega)>2r$. 
\end{enumerate}    
\end{assum}
\begin{rem}\label{rem: example}
Open connected sets may fail to possess the property ($\Omega3$). For example, let $\mathcal{C}\subseteq[0,1]$ denote the Cantor ternary set. For each $y\in\mathcal{C}$, let $T_y\subseteq[0,1]\times[0,1]$ denote the open triangle with vertices at $(0,0)$, $(0,1)$, and $(1,y)$. Define the open set $\Omega=\bigcup_{y\in\mathcal{C}}T_y$. Then, $\pinv((1))=\mathcal{C}$ is a totally disconnected uncountable set. Therefore, it violates ($\Omega3$).



\end{rem}
Our main result is
\begin{thm}\label{thm:main}
    Let $\Omega$ be an open set that satisfies all the conditions of Assumptions~\ref{assum_monotonicty} and~\ref{assum_domain}. Then, there is a $\lambda_0\in\re$ such that $\Omega$ is symmetric across the hyperplane $\{(\xh,x_n)\in\R^n: x_n = \lambda_0\}$.
\end{thm}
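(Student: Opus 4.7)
My plan is to execute a nonlocal adaptation of Alexandrov's moving plane method, run downward from above. For each $\lambda\in\re$ write $\Omega^+_\lambda:=\Omega\cap\{x_n>\lambda\}$, $\Omega^-_\lambda:=\Omega\cap\{x_n<\lambda\}$, and let $R_\lambda:\re^n\to\re^n$ denote reflection across the hyperplane $\{x_n=\lambda\}$. For $\lambda>\sup_{x\in\overline{\Omega}}x_n$ the inclusion $R_\lambda(\Omega^+_\lambda)\subseteq\overline{\Omega^-_\lambda}$ is vacuously true; lowering $\lambda$ to the first obstruction, set
\[
\lambda_0:=\inf\bigl\{\lambda\in\re\,:\,R_\mu(\Omega^+_\mu)\subseteq\overline{\Omega^-_\mu}\ \text{for every}\ \mu\ge\lambda\bigr\}.
\]
Boundedness of $\Omega$ combined with $(\Omega2)$ makes $\lambda_0$ finite, and the inclusion persists at $\lambda_0$ by continuity of translations in $L^1$. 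The aim is to upgrade the inclusion to equality up to a null set, which is precisely the desired symmetry.

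\textbf{A central integral identity.} The radial symmetry $(J1)$ together with the affine-isometry property of $R_{\lambda_0}$ yields $|x-R_{\lambda_0}(w)|=|y-w|$ whenever $y=R_{\lambda_0}(x)$, so $J(y-w)=J(x-R_{\lambda_0}(w))$. Substituting $w\mapsto R_{\lambda_0}(w)$ in the expression~\eqref{curvat} for $H^J_\Omega(y)$ and subtracting $H^J_\Omega(x)$ produces
\[
H^J_\Omega(y)-H^J_\Omega(x)=2\int_{\{w_n<\lambda_0\}}\bigl[J(x-w)-J(x-R_{\lambda_0}(w))\bigr]\bigl[\chi_\Omega(w)-\chi_{R_{\lambda_0}\Omega}(w)\bigr]dw.
\]
A short computation gives $|x-R_{\lambda_0}(w)|>|x-w|$ for $x_n,w_n<\lambda_0$, so $(J3)$ renders the first bracket non-negative; under $(J4b)$ it is strictly positive on $B_r(x)$, and under $(J4a)+(\Omega4)$ it is strictly positive on a nontrivial annular region. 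The reflection inclusion makes the second bracket non-negative on $\{w_n<\lambda_0\}$. Hence the integrand is pointwise non-negative and $H^J_\Omega(y)\ge H^J_\Omega(x)$.

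\textbf{Closing with ordered curvature.} Suppose for contradiction that the inclusion at $\lambda_0$ is strict on a set of positive measure. A standard moving-plane compactness argument then produces an interior tangent point $x\in\partial\Omega\cap\{x_n<\lambda_0\}$ whose reflection $y:=R_{\lambda_0}(x)$ also lies in $\partial\Omega$. Using $(\Omega3)$ and the reflection inclusion, I verify the vertical segment $\overline{xy}$ lies in $\overline{\Omega}$: any gap in $\pinv_{\overline{\Omega}}(\xh)$ inside $(x_n,\lambda_0)$ would be forced by the inclusion to have a mirrored gap in $(\lambda_0,y_n)$, contradicting $y\in\partial\Omega$. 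Assumption $(\Omega1)$ then delivers $H^J_\Omega(x)\le H^J_\Omega(y)$. Coupling this with the identity and the strict positivity of the kernel-difference forces the integrand to vanish almost everywhere on $B_r(x)\cap\{w_n<\lambda_0\}$, yielding $\chi_\Omega=\chi_{R_{\lambda_0}\Omega}$ almost everywhere there. Using the classification of boundary points (P1)-(P4) to move the tangent point along $\partial\Omega$, and iterating a ball-by-ball argument with the finite horizon $r$ on the connected set $\Omega$, this local $L^1$-symmetry extends to all of $\overline{\Omega}$, contradicting strict inclusion. The residual case where $\lambda_0$ is attained by tangency on the hyperplane itself (rather than at a genuine interior tangent) is handled by a direct geometric argument exploiting $(\Omega4)$.

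\textbf{Main obstacle.} The delicate step is forcing the integrand in the identity to vanish: both the identity and $(\Omega1)$ yield $H^J_\Omega(y)\ge H^J_\Omega(x)$ in the same direction, so equality is not automatic. To close this gap I expect to need either an auxiliary comparison obtained by perturbing $x$ along a vertical chord of $\overline{\Omega}$ and applying $(\Omega1)$ to each perturbed pair, or an explicit use of $(J4b)$ (respectively $(\Omega4)$ in the constant-kernel case $(J4a)$) to convert a positive-measure mismatch between $\Omega$ and $R_{\lambda_0}\Omega$ into a strict violation of $(\Omega1)$ at a secondary pair. Equally subtle is the verification of the segment containment $\overline{xy}\subseteq\overline{\Omega}$ under the weak regularity $(\Omega3)$: as Remark~\ref{rem: example} shows, the countable-interval hypothesis is essential and cannot be replaced by projections being merely closed.
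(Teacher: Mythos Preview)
Your ``main obstacle'' is partly self-inflicted: by sliding the plane \emph{downward from above} you have chosen the direction in which the reflection identity and $(\Omega1)$ point the same way. If instead you reflect the lower cap upward (increase $\lambda$ until $R_\lambda(\Omega^-_\lambda)$ first fails to sit inside $\overline{\Omega^+_\lambda}$), then at an interior touching pair $x\in\partial\Omega\cap\{x_n<\lambda_0\}$, $y=R_{\lambda_0}(x)\in\partial\Omega$, the same folding gives
\[
H^J_\Omega(y)-H^J_\Omega(x)=2\int_{\{w_n>\lambda_0\}}\bigl[J(x-w)-J(x-R_{\lambda_0}(w))\bigr]\bigl[\chi_\Omega(w)-\chi_{R_{\lambda_0}\Omega}(w)\bigr]\,dw,
\]
and now the excess $\Omega\setminus R_{\lambda_0}(\Omega)$ lives in the \emph{upper} half-space, where the kernel bracket is $\le0$ (since $x$ is below and $R_{\lambda_0}(w)$ is on the same side as $x$). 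Hence the identity yields $H^J_\Omega(y)\le H^J_\Omega(x)$, the opposite of $(\Omega1)$, and equality plus vanishing of the integrand follow immediately. So for the interior-touching scenario your outline can be repaired by reversing the sweep direction.

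The genuine gap is your dismissal of the non-transversal case as ``residual\ldots handled by a direct geometric argument exploiting $(\Omega4)$.'' When $R^+_{\lambda_0}(\overline{\Omega})\setminus\Omega=\emptyset$ there is no touching pair at $\lambda_0$; one must manufacture auxiliary boundary pairs $z_k\sim\tilde z_k$ that are reflections of each other across nearby planes $\lambda^*_k\neq\lambda_0$ (on the \emph{wrong} side of $\lambda_0$, where the reflection inclusion already fails). For such $\lambda^*_k$ the integrand in the identity has no sign, so the trick above does not deliver $H^J(z_k)=H^J(\tilde z_k)$. The paper supplies this equality by a separate global argument: differentiating $t\mapsto P^J(\Omega+te_n)$ at $t=0$ and using translation invariance of the perimeter shows
\[
0=\int_R\sum_{i\in\Pi_{\xh}}\bigl[H^J(\xh,\overline{x}^i)-H^J(\xh,\underline{x}^i)\bigr]\,d\xh,
\]
whence $(\Omega1)$ forces $H^J(\underline{x})=H^J(\overline{x})$ for \emph{every} pair $\underline{x}\sim\overline{x}$ on $\partial\Omega$, independently of any reflection. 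This pairwise equality is then fed into the identity at $z_k,\tilde z_k$ to produce the volume estimates (Lemmas~\ref{lem:non_trans}--\ref{lem: J4bNontran}) that rule out the non-transversal configuration; $(\Omega4)$ enters only at the very end of that chain. Your proposed fixes---perturbing $x$ along a chord, or invoking $(J4b)/(\Omega4)$ directly---do not produce this global equality, and without it the non-transversal case does not close.
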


  There are two main steps to prove Theorem \ref{thm:main}. In the first step, we prove the fact that if the curvature values at different points on the boundary are ordered along the $x_n$-direction, then they must be equal. This result is similar to its classical counterpart \cite[Proposition 3]{Li-3}, and is also obtained by employing a variational method. However, since our set has minimal regularity assumptions, proving this step becomes much more challenging than in the classical framework. The second step in our proof will employ the Alexandrov moving-plane method. In \cite{Bucur1, Bucur2}, the authors introduced a generalized version of the Alexandrov moving plane method to solve the constant mean curvature problem. Although the main idea in the ordered curvature setting is similar, the results and arguments of \cite{Bucur1, Bucur2} cannot be applied directly to the ordered curvature problem, as the curvature is not necessarily constant along the boundary. More precisely, we can only use the fact that the curvatures at different boundary points connected by a straight line along the $x_n$-axis and in $\overline{\Omega}$ are equal. To overcome this obstacle, we develop new arguments that employ the monotonicity properties of the curvature.

\section{{Ordered Curvature Implies Pairwise Equal Curvature}}
Throughout the paper we denote the points in $\R^n$ by $x=(\xh,x_n)\in \R^{n-1}\times \R$. Unless explicitly mentioned otherwise, we will simply write $H^J$ instead of $H^J_\Omega$. Given $\xh\in R$, there is a countable index set $\Pi_{\xh}\subseteq\N$ such that
\[
    \pinv_{\overline{\Omega}}(\xh) = \bigcup_{i \in {\Pi}_{\xh}} [\ux^i, \overline{x}^i].
\]
Without loss of generality, we may assume that $\ux^i>\overline{x}^{i+1}$, for each $i\in\Pi_{\xh}$. Thus, $\{[\ux^i,\overline{x}^i]\}_{i\in\Pi_{\xh}}$ consists of disjoint intervals. Note that we may have $\ux_i=\overline{x}^i$, for some $i\in\Pi_{\xh}$.
We observe that a point $x=(\xh, x_n) \in \partial \Omega$ may be classified as follows:
\begin{enumerate}[(P1)]
    \item There exists $\delta_0 =\delta_0(x)>0$ such that the line segment $\{\xh\}\times[x_n- \delta,x_n] \subseteq \overline{\Omega}$, while the line segment $\{\xh\}\times(x_n,x_n+\delta) \subseteq \overline{\Omega}^c$ for every $\delta <\delta_0$. We call this point a {\it top boundary point}. The collection of top boundary points of $\Omega$ is denoted by $M_1$.
    \item There exists $\delta_0 =\delta_0(x)>0$ such that the line segment $\{\xh\}\times(x_n- \delta, x_n) \subseteq \overline{\Omega}^c$, while $\{\xh\}\times[x_n, x_n+\delta] \subseteq \overline{\Omega}$ for all $\delta<\delta_0$. Such a  point will be called a {\it bottom boundary point}. The collection of bottom boundary points of $\Omega$ is denoted by $M_2$.
    \item There exists $\delta_0 =\delta_0(x)>0$ such that $\{\xh\}\times[x_n- \delta, x_n +\delta] \subseteq \overline{\Omega}$ for all $\delta<\delta_0$. This type of point will be referred to as a {\it lateral boundary point}. The collection of lateral boundary points of $\Omega$ is denoted by $M_3$.
    \item There exists $\delta_0 =\delta_0(x)>0$ such that $(\{\xh\}\times(x_n- \delta, x_n+\delta))\setminus\{x\}\subseteq \overline{\Omega}^c$ for all $\delta<\delta_0$. We call this {\it a (directionally) isolated point}. The set of all isolated points is denoted by $M_4$.
\end{enumerate}
Note that the above classification is dependent on the choice of the direction $x_n$. We also observe that for the example in Remark~\ref{rem: example}, each $(1,x_n)\in\partial\Omega$ is an isolated point.

Given the above nomenclature and the fact that $\pinv_{\overline{\Omega}}(\xh) = \bigcup_{i \in {\Pi}_{\xh}} [\ux^i, \overline{x}^i]$, whenever $\ux^i \neq \overline{x}^i$ for some $i$, we see that $\ux^i, \overline{x}^i$ are bottom and top boundary points, respectively, connected via a line segment (parallel to $x_n$-axis) in $\overline{\Omega}$. In such a situation, we write $\ux^i \sim \overline{x}^i$. On the other hand, if $\ux^i = \overline{x}^i$ for some $i$, then we have an isolated point.

\begin{thm}\label{thm:monotone_implies_equality}
    Let $\Omega$ be a bounded, open and connected set that satisfies Assumption \ref{assum_domain}.  Then, for any two boundary points $\ux$ and $\overline{x}$ such that $\ux \sim \overline{x}$, we have $H^J(\ux) = H^J(\overline{x})$. 
\end{thm}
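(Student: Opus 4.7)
My plan is to prove the theorem by contradiction, using a localized volume-preserving perturbation argument in the spirit of the classical variational proof of Li and Nirenberg. Suppose $H^J(\underline{x})<H^J(\overline{x})$ and set $\delta:=H^J(\overline{x})-H^J(\underline{x})>0$, $\underline{x}=(\hat{x},a)$, $\overline{x}=(\hat{x},b)$. Because $\underline{x}$ is a bottom boundary point and $\overline{x}$ a top boundary point, openness of $\Omega$ yields $\eta>0$ such that $\{\hat{x}\}\times(a,a+\eta)\subseteq\Omega$ while $\{\hat{x}\}\times(b,b+\eta)\subseteq\Omega^c$.

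For small $\epsilon,\rho>0$ I would introduce cylindrical slabs
\[
B_{-}:=B_\rho(\hat{x})\times(a,a+\epsilon),\qquad B_{+}:=B_\rho(\hat{x})\times(b,b+\epsilon),
\]
of equal volume, which are contained in $\Omega$ and $\Omega^c$ respectively up to an $o(\rho^{n-1}\epsilon)$ error controlled by $(\Omega2)$. Setting $\Omega_\epsilon:=(\Omega\cup B_{+})\setminus B_{-}$ produces a new pair of boundary points $\underline{x}^\epsilon:=(\hat{x},a+\epsilon)$ and $\overline{x}^\epsilon:=(\hat{x},b+\epsilon)$, still vertically connected in $\overline{\Omega_\epsilon}$. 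The pointwise identity
\[
H^J_{\Omega_\epsilon}(y)-H^J_\Omega(y) = -2\int_{B_{+}}J(y-w)\,dw + 2\int_{B_{-}}J(y-w)\,dw,
\]
combined with the radial monotonicity of $J$, leads to
\[
H^J_{\Omega_\epsilon}(\overline{x}^\epsilon)-H^J_{\Omega_\epsilon}(\underline{x}^\epsilon) = \delta - 4\omega_{n-1}\rho^{n-1}\epsilon\bigl(J(0)-J((b-a)e_n)\bigr) + o(\rho^{n-1}\epsilon).
\]
In case (J4b) the strict inequality $J(0)>J((b-a)e_n)$ is automatic, while in case (J4a) it requires $b-a>r$, the relevant regime to be secured through $(\Omega4)$. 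For $\epsilon\rho^{n-1}$ chosen sufficiently large, the quantity above becomes strictly negative, so that the new pair in $\Omega_\epsilon$ exhibits a reversed curvature ordering.

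The hard part — and the technical heart of the argument — is converting this reversal into a contradiction with $(\Omega1)$ for $\Omega$ itself, because $\Omega_\epsilon$ need not inherit ordered curvature. My plan to bridge the gap is a double-scale argument: fix $\rho$ first, then use $(\Omega3)$ to locate, for each sufficiently small horizontal displacement $\hat{x}'\in R$ close to but lying outside $\overline{B_\rho(\hat{x})}$, a genuine pair $\underline{x}'\sim\overline{x}'\in\partial\Omega$ arising from an interval in $\pinv_{\overline{\Omega}}(\hat{x}')$ close to $[a,b]$. Continuity of $H^J_\Omega(\cdot)$ in its positional argument (an immediate consequence of $J\in L^1\cap L^\infty$) makes $H^J_\Omega(\underline{x}')$ and $H^J_\Omega(\overline{x}')$ arbitrarily close to $H^J_\Omega(\underline{x})$ and $H^J_\Omega(\overline{x})$; carefully matching the perturbation's reversal estimate with these continuity errors should yield a genuine pair in $\partial\Omega$ whose curvatures violate $(\Omega1)$. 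The main obstacle is this abundance-and-transfer step, which must produce enough nearby pairs (via $(\Omega3)$) and transfer the perturbation's effect quantitatively from $\Omega_\epsilon$ back to $\Omega$. The case (J4a) with $b-a\le r$, where the linearized perturbation estimate degenerates, will need a separate argument relying more heavily on the nondegeneracy $(\Omega4)$.
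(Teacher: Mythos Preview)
Your proposal has a genuine structural gap, and it is not in the ``hard part'' you flag but one step earlier: the perturbed set $\Omega_\epsilon$ carries no hypothesis at all. Assumption~$(\Omega1)$ is a hypothesis on $\Omega$, not on $\Omega_\epsilon$, so exhibiting a pair $\underline{x}^\epsilon\sim\overline{x}^\epsilon$ in $\partial\Omega_\epsilon$ with reversed curvature ordering is simply not a contradiction. Your ``bridge'' idea then cannot rescue this: for any nearby genuine pair $\underline{x}'\sim\overline{x}'\in\partial\Omega$, the quantities $H^J_\Omega(\underline{x}')$ and $H^J_\Omega(\overline{x}')$ are computed with respect to the \emph{original} set $\Omega$, and continuity of $H^J_\Omega$ forces $H^J_\Omega(\overline{x}')-H^J_\Omega(\underline{x}')\approx\delta>0$. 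The perturbation $\Omega_\epsilon$ plays no role whatsoever in this last computation, so there is nothing to ``transfer.'' A secondary symptom of the same problem is your sentence ``for $\epsilon\rho^{n-1}$ chosen sufficiently large'': $\epsilon$ and $\rho$ are small parameters constrained by the geometry of $\Omega$ near $(\hat{x},a)$ and $(\hat{x},b)$, and there is no reason $\epsilon\rho^{n-1}$ can be pushed past the fixed threshold $\delta/\bigl(4(J(0)-J((b-a)e_n))\bigr)$.

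The paper's argument is global rather than local, and that is what makes it work. One uses the translation invariance $P^J(\overline{\Omega}_t)=P^J(\overline{\Omega})$ for the vertical shift $\overline{\Omega}_t=\overline{\Omega}+t e_n$, expands $P^J(\overline{\Omega}_t)-P^J(\overline{\Omega})$ in terms of $E_t=\overline{\Omega}_t\setminus\overline{\Omega}$ and $F_t=\overline{\Omega}\setminus\overline{\Omega}_t$, divides by $t$, and passes to the limit $t\to0^+$. The outcome is the integral identity
\[
0=\int_R\sum_{i\in\Pi_{\hat{x}}}\bigl[H^J(\hat{x},\overline{x}^i)-H^J(\hat{x},\underline{x}^i)\bigr]\,d\hat{x},
\]
whose integrand is nonnegative by $(\Omega1)$; hence every summand vanishes. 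The technical work (and where $(\Omega2)$, $(\Omega3)$ and the Lipschitz bound on $H^J$ coming from (J4a)/(J4b) are used) lies in controlling the cross terms $\int_{E_t}\int_{F_t}J$, $\int_{E_t}\int_{E_t}J$, $\int_{F_t}\int_{F_t}J$ and in handling the countably many intervals in $\pinv_{\overline{\Omega}}(\hat{x})$. The point is that translation invariance supplies a genuine conserved quantity whose first variation yields a constraint on $\Omega$ itself; a localized volume-preserving swap does not.
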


To demonstrate the proof strategy, we first establish this theorem in the special case where $\pinv_{\overline{\Omega}}(\xh)=[\ux,\overline{x}]$ is a single interval for every $\xh \in R$.
\begin{thm}\label{lem: simple}
     Let $\Omega$ be a bounded, open and connected set that satisfies the monotonicity condition for the nonlocal mean curvature, as mentioned in \ref{assum_domain}. Additionally, assume that $\pinv_{\overline{\Omega}}(\xh)$ is a closed interval for every $\xh \in R$.  
      Then for any two boundary points $\ux$ and $\overline{x}$ such that $\ux \sim \overline{x}$, we have $H^J_\Omega(\ux) = H^J_\Omega(\overline{x})$.
\end{thm}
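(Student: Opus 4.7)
My plan is to exploit the translation invariance of the nonlocal perimeter $P^J$ in the $e_n$-direction through a finite-difference first-variation calculation that bypasses any appeal to classical boundary regularity.

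\emph{Step 1 (exact finite-difference identity).} I would introduce the translated set $\Omega_t := \Omega + te_n$; since $J$ is translation invariant, $P^J(\Omega_t) = P^J(\Omega)$ for every $t \in \re$. Setting $\delta_t := \chi_{\Omega_t} - \chi_\Omega$ and expanding $\chi_{\Omega_t}(x)\chi_{\Omega_t^c}(y)$ inside the double integral defining $P^J$ yields the exact identity
\[
0 = P^J(\Omega_t) - P^J(\Omega) = \int_{\re^n} \delta_t(x)\, H^J(x)\, dx - R(t),
\]
with $R(t) := \iint J(x-y)\, \delta_t(x)\, \delta_t(y)\, dx\, dy$. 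Since $J$ is bounded and $|\Omega_t \triangle \Omega| = O(t)$ (each fibre of the symmetric difference has length at most $t$ by the single-interval hypothesis), one has $|R(t)| \le \|J\|_\infty |\Omega_t \triangle \Omega|^2 = O(t^2)$, hence $\int \delta_t H^J\, dx = O(t^2)$.

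\emph{Step 2 (Fubini and passage to the limit).} For $t > 0$ small, the upper strip $U_t := \Omega_t \setminus \Omega$ and lower strip $L_t := \Omega \setminus \Omega_t$ admit the slice representation
\[
U_t \cap \pi^{-1}(\xh) = [\ox(\xh), \ox(\xh)+t), \qquad L_t \cap \pi^{-1}(\xh) = (\ux(\xh), \ux(\xh)+t],
\]
valid for a.e.\ $\xh \in R$ with $\ux(\xh) < \ox(\xh)$. Combining Fubini, the continuity of $H^J = J * \tau_\Omega$ (which follows from $J \in L^1 \cap L^\infty$), and dominated convergence, I arrive at
\[
0 = \lim_{t \to 0^+} \frac{1}{t} \int \delta_t H^J\, dx = \int_R \bigl[H^J(\xh, \ox(\xh)) - H^J(\xh, \ux(\xh))\bigr]\, d\xh.
\]
The monotone $J$-curvature hypothesis gives pointwise non-negativity of the integrand on $R$, so a non-negative function with zero integral vanishes almost everywhere. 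To upgrade to pointwise equality, I would argue that on the set $R^+ := \{\xh : \ux(\xh) < \ox(\xh)\}$, the maps $\ux, \ox$ are continuous (upper semicontinuity from closedness of $\overline{\Omega}$, lower semicontinuity from openness of $\Omega$ together with the single-interval hypothesis), so the integrand is continuous on $R^+$ and therefore identically zero there; on $R \setminus R^+$ the two endpoints coincide and the equality is trivial.

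\textbf{Main obstacle.} The delicate step is Step 2: justifying the clean slice representation of $U_t$ and $L_t$ without any smoothness of $\partial \Omega$. A priori, the open fibre $\pinv_\Omega(\xh)$ can fail to equal $(\ux(\xh), \ox(\xh))$ (for instance, when $\Omega$ is the open unit square with a vertical slit removed, even though $\pinv_{\overline{\Omega}}(\xh)$ remains a single interval at every $\xh$). Fortunately, assumption $(\Omega 2)$ — that $\partial \Omega$ has zero $\mathcal{H}^n$-measure — forces the set of such exceptional $\xh$ to have zero $(n-1)$-dimensional Lebesgue measure in $R$, which is precisely what the Fubini/dominated-convergence step requires.
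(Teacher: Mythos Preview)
Your strategy---translate $\Omega$ by $te_n$, expand $P^J(\Omega_t)-P^J(\Omega)=0$, isolate the first-order term $\int\delta_tH^J$, and pass to the limit---is exactly the paper's approach. Your handling of the quadratic remainder is actually cleaner than the paper's: the single-line bound $|R(t)|\le\|J\|_\infty|\Omega_t\triangle\Omega|^2=O(t^2)$ replaces the paper's more laborious treatment of the three cross terms via the auxiliary functions $I^J_t(x)=\int_{E_t}J(x-y)\,dy$. (Minor correction: each fibre of $\Omega_t\triangle\Omega$ has length at most $2t$, not $t$, but this is harmless.)

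There is, however, a genuine gap in your final step upgrading the almost-everywhere equality to pointwise equality. The claim that $\ux,\ox$ are continuous on $R^+$ is false under the stated hypotheses. Take $\Omega=(-1,1)^2\setminus[0,1]^2$ in $\re^2$: this L-shaped region is open, bounded, connected, satisfies $(\Omega2)$, and every closed fibre $\pinv_{\overline\Omega}(\xh)$ is a single interval; yet $\ox(0)=1$ while $\ox(\xh)=0$ for all small $\xh>0$, so $\ox$ is discontinuous at $\xh_0=0\in R^+$. Your argument for lower semicontinuity of $\ox$ tacitly assumes that the \emph{open} fibre $\pinv_\Omega(\xh_0)$ accumulates at $\ox(\xh_0)$ from below, so that openness of $\Omega$ propagates the height to nearby fibres---but lateral boundary points (type (P3)) can block this. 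The paper, for its part, simply asserts ``for all $\xh\in R$'' after establishing the a.e.\ statement, so you have correctly spotted that something needs to be said here; but the continuity route you propose does not go through, and a correct argument will have to combine the Lipschitz continuity of $H^J$ with the fact that both $(\xh_0,\ox(\xh_0))$ and $(\xh_0,\ux(\xh_0))$ are limits of points of the open set $\Omega$ in a more delicate way.
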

\begin{proof}[{Proof}]
The case $\ux= \overline{x}$ is trivial. Assuming otherwise, we translate $\Omega$ in the $x_n$-direction by an amount $t$. Denote the translation of $\overline{\Omega}$ by $\overline{\Omega}_t$, and the components of the relative complements by $E_t: = \overline{\Omega}_t \setminus \overline{\Omega}$ and $F_t: = \overline{\Omega} \setminus \overline{\Omega}_t$ (see Figure \ref{fig:tran}).
\begin{figure}{\label{fig:tran}}
\begin{center}
\begin{tikzpicture}[scale=.9]
    \draw[-latex,black,thick] (-.5,-.5)--(-.5,6.1) node[above]{$x_n$};
    \draw[thick,black,-latex] (-.5,-.8)--(8,-.8) node[right]{$\re^{n-1}$};
    \draw[red!70!black,line width=.7pt,fill=red!40,opacity=.8,thick]
    (0,0).. controls +(left:0cm) and +(left:0cm) .. (0,1)
    .. controls +(right:0cm) and +(right:0cm) .. (3,5)
    .. controls +(right:1cm) and +(up:1.5cm) .. (6,3)
    .. controls +(down:.25cm) and +(right:.5cm) .. (4,2)
    .. controls +(right:0cm) and +(right:0cm) .. (7,0)
    .. controls +(left:0cm) and  +(right:0cm) .. (0,0);
    \node[red!70!black] at(1.5,.5){$\overline{\Omega}$};
    \begin{scope}[yshift=25pt]
        \draw[blue!80!black,line width=.7pt,fill=blue!60,opacity=.4,thick]
    (0,0).. controls +(left:0cm) and +(left:0cm) .. (0,1)
    .. controls +(right:0cm) and +(right:0cm) .. (3,5)
    .. controls +(right:1cm) and +(up:1.5cm) .. (6,3)
    .. controls +(down:.25cm) and +(right:.5cm) .. (4,2)
    .. controls +(right:0cm) and +(right:0cm) .. (7,0)
    .. controls +(left:0cm) and  +(right:0cm) .. (0,0);
    \end{scope}

    \begin{scope}
        \clip (0,0).. controls +(left:0cm) and +(left:0cm) .. (0,1)
    .. controls +(right:0cm) and +(right:0cm) .. (3,5)
    .. controls +(right:1cm) and +(up:1.5cm) .. (6,3)
    .. controls +(down:.25cm) and +(right:.5cm) .. (4,2)
    .. controls +(right:0cm) and +(right:0cm) .. (7,0)
    .. controls +(left:0cm) and  +(right:0cm) .. (0,0);

    \begin{scope}[yshift=25pt]
        \draw[red!70!black,line width=.7pt,fill=black!20,opacity=.4,thick]
    (0,0).. controls +(left:0cm) and +(left:0cm) .. (0,1)
    .. controls +(right:0cm) and +(right:0cm) .. (3,5)
    .. controls +(right:1cm) and +(up:1.5cm) .. (6,3)
    .. controls +(down:.25cm) and +(right:.5cm) .. (4,2)
    .. controls +(right:0cm) and +(right:0cm) .. (7,0)
    .. controls +(left:0cm) and  +(right:0cm) .. (0,0);
    \node[blue!70!black,opacity=1] at(1.5,.5){$\overline{\Omega}_t$};
    \end{scope}
    \end{scope}
    \draw[red!70!black,thick,-latex] (4,-.3)--(3.5,.5);
    \node[red!70!black,below right] at (3.9,-.1){$F_t=\overline{\Omega}\setminus\overline{\Omega}_t$};
    \draw[blue!70!black,thick,-latex] (5.5,5.6)--(4.4,5.3);
    \node[blue!70!black,right] at (5.5,5.7){$E_t=\overline{\Omega}_t\setminus\overline{\Omega}$};
\end{tikzpicture}  
\end{center}
\caption{Translation domains and relative complement components} {\label{fig:tran}}
\end{figure}
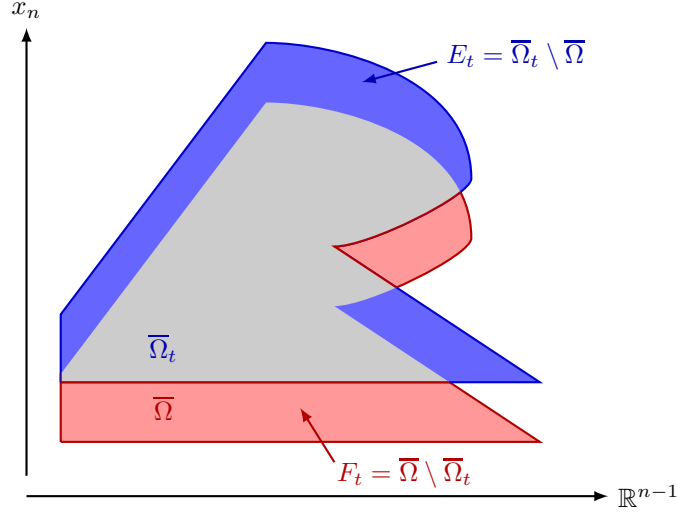

Then, using the definition of nonlocal perimeter, we have 
\begin{align}
\nonumber
   0&= P^J(\overline{\Omega}_t) - P^J(\overline{\Omega}) \\
\nonumber
   &= \int_{\overline{\Omega}} \int_{F_t} J(x-y) \,dy \,dx - \int_{F_t} \int_{\overline{\Omega}^c} J(x-y) \,dy \,dx -\int_{F_t} \int_{F_t} J(x-y) \,dy \,dx \\
\nonumber
   &\qquad - \int_{\overline{\Omega}} \int_{E_t} J(x-y) \,dy \,dx + 2 \int_{E_t}\int_{F_t} J(x-y) \,dy \,dx + \int_{E_t} \int_{\overline{\Omega}^c} J(x-y) \,dy \,dx \\
\nonumber
   &\qquad -\int_{E_t} \int_{E_t} J(x-y) \,dy \,dx \\
\label{eq: perim0}
   &= \int_{F_t} H^J(x) dx - \int_{E_t} H^J(x) dx + 2 \int_{E_t}\int_{F_t} J(x-y) \,dy \,dx \\
\nonumber
   &\qquad -\int_{F_t} \int_{F_t} J(x-y) \,dy \,dx
   - \int_{E_t} \int_{E_t} J(x-y) \,dy \,dx.
\end{align}
In the previous line we used the fact that {$\partial \Omega$ has measure $0$ by ($\Omega 2$)} and hence:
\begin{align*} \int_{\R^n}\big(\chi_{\overline{\Omega}^c}(y) - &\chi_{\overline{\Omega}}(y) \big) J(x-y) dy\\ &=\int_{\R^n}\big(\chi_{{\Omega}^c}(y) - \chi_{{\Omega}}(y) \big) J(x-y) dy - 2 \int_{\partial \Omega}J(x-y) dy = H^J(x).
\end{align*}
Next, we define
$$f_1(\xh): = \max \{x_n: (\xh, x_n) \in \partial \Omega \}
\quad\text{ and }\quad f_2(\xh): =  \min \{x_n: (\xh, x_n) \in \partial \Omega \}.$$
Recall that $R = \pi(\overline{\Omega})$. Let us define
\[
    R'_t: = \{ \xh \in R: f_1(\xh) - f_2(\xh) >t \}
    \quad\text{ and }\quad
    R''_t: = \{ \xh \in R: f_1(\xh) - f_2(\xh) \leq t \}.
\]
Hence,
$$\int_{E_t} H^J(x) dx = \int_{R'_t} \int^{f_1(\xh) +t}_{f_1(\xh)} H^J(\xh,x_n) dx_n \, d\xh + \int_{R''_t} \int^{f_1(\xh) +t}_{f_2(\xh) +t} H^J(\xh,x_n) dx_n \, d\xh $$
and
$$\int_{F_t} H^J(x) dx = \int_{R'_t} \int^{f_2(\xh) +t}_{f_2(\xh)} H^J(\xh,x_n) dx_n \, d\xh + \int_{R''_t} \int^{f_1(\xh) }_{f_2(\xh) } H^J(\xh,x_n) dx_n \, d\xh. $$
Using a change of variables we obtain
$$\int_{E_t} H^J(x) dx = \int_{R'_t} \int^{t}_{0} H^J(\xh,x_n+f_1(\xh)) dx_n \, d\xh + \int_{R''_t} \int^{f_1(\xh)}_{f_2(\xh)} H^J(\xh,x_n+t) dx_n \, d\xh, $$
and
$$\int_{F_t} H^J(x) dx = \int_{R'_t} \int^{t}_{0} H^J(\xh,x_n+f_2(\xh)) dx_n \, d\xh + \int_{R''_t} \int^{f_1(\xh) }_{f_2(\xh)} H^J(\xh,x_n) dx_n \, d\xh. $$
Thus, we get
\begin{align*}
    &\int_{E_t} H^J(x) dx - \int_{F_t} H^J(x) dx \\
    &\:=\int_{R'_t} \int^{t}_{0} H^J(\xh,x_n+f_1(\xh)) dx_n \, d\xh + \int_{R''_t} \int^{f_1(\xh)}_{f_2(\xh)} H^J(\xh,x_n+t) dx_n \, d\xh \\
    &\qqqquad - \int_{R'_t} \int^{t}_{0} H^J(\xh,x_n+f_2(\xh)) dx_n \, d\xh - \int_{R''_t} \int^{f_1(\xh) }_{f_2(\xh)} H^J(\xh,x_n) dx_n \, d\xh \\
    &\:= \int_{R'_t} \int^t_0 \big[ H^J(\xh, f_1(\xh)) - H^J(\xh, f_2(\xh)) \big] dx_n \,d\xh\\
    &\qqqquad + \int_{R'_t} \int^{t}_{0} \big[H^J(\xh,x_n+f_1(\xh))- H^J(\xh, f_1(\xh))\big] dx_n \, d\xh \\
    &\qqqquad - \int_{R'_t} \int^{t}_{0} \big[H^J(\xh,x_n+f_2(\xh))- H^J(\xh, f_2(\xh))\big] dx_n \, d\xh\\
    &\qqqquad+ \int_{R''_t} \int^{f_1(\xh)}_{f_2(\xh)} \big[H^J(\xh,x_n+t)-H^J(\xh, x_n) \big] dx_n \, d\xh.
\end{align*}
 Next we see that:
\begin{align*}
    H^J(x)-H^J(\overline{x})&= \int_{\Omega^c} (J(x-y)-J(\overline{x} -y)) \, dy - \int_\Omega (J(x-y)-J(\overline{x} -y)) \, dy \\
    &= -2\int_\Omega (J(x-y)-J(\overline{x} -y)) \, dy.
\end{align*}
Then, under the assumption (J4a), we have
\begin{align*}
    H^J(x)-H^J(\overline{x})   &=- 2\int_{\Omega \cap \Delta} (J(x-y)-J(\overline{x} -y)) \, dy,
\end{align*}
where $\Delta = (B_r(x) \setminus B_r(\overline{x})) \cup (B_r(\overline{x}) \setminus B_r(x))$. For $\delta>0$ sufficiently small, $|\overline{x}-x|<\delta$ implies $|\Delta| < w_n r^{n-1} \delta$. Thus, we obtain:
$$|H^J(x) - H^J(\overline{x})| \leq 4  \min \{ |\Delta|, |\Omega| \} \leq 4 w_n r^{n-1} \delta.$$ 
On the other hand, under the assumption (J4b), we have
\begin{align*}
    H^J(x)-H^J(\overline{x})
    &= -2\int_\Omega (J(x-y)-J(\overline{x} -y)) \, dy \\
    &= -2 \int_\Omega \nabla J(\xi-y) \cdot (x-\overline{x}) dy
\end{align*}
where $\xi = \tau x + (1-\tau)\overline{x}$ for some $0\leq \tau \leq 1$. Since $J$ is continuously differentiable, we get
\begin{equation}\label{eq:HLip}
    |H^J(x) - H^J(\overline{x})| \leq 2\sup\{ |\nabla J(z)|:z\in\R^n \}\cdot |x-\overline{x}| |\Omega|.
\end{equation}
Thus, both cases show that $H$ is uniformly continuous and 
$$|H^J(x) - H^J(\overline{x})| \leq C|x- \overline{x}|.$$
Using this bound, we find
$$|H^J(\xh,x_n + f_1(\xh)) - H^J(\xh, f_1(\xh))| \leq Ct $$
$$|H^J(\xh,x_n + f_2(\xh)) - H^J(\xh, f_2(\xh))| \leq Ct $$
and
$$|H^J(\xh,x_n+t)-H^J(\xh, x_n)| \leq Ct .$$
Recalling that for $\xh \in R''_t,$, we have $|f_1(\xh) - f_2(\xh)| \leq t$, we additionally obtain the bound
\begin{multline*}
    \bigg|\int_{R'_t} \int^{t}_{0} \left[H^J(\xh,x_n+f_1(\xh))- H^J(\xh, f_1(\xh))\right] dx_n \, d\xh \\
    - \int_{R'_t} \int^{t}_{0} \big[H^J(\xh,x_n+f_2(\xh))- H^J(\xh, f_2(\xh))\big] dx_n \, d\xh\\ 
     + \int_{R''_t} \int^{f_1(\xh)}_{f_2(\xh)} \big[H^J(\xh,x_n+t)-H^J(\xh, x_n) \big] dx_n \, d\xh\bigg| \leq 3C \mcl^{n-1} (R) t^2.
\end{multline*}
The monotone convergence theorem yields
$$ \int_{R'_t} \big[H^J(\xh, f_1(\xh)) - H^J(\xh, f_2(\xh)) \big] d \xh \to \int_{R} \big[H^J(\xh, f_1(\xh)) - H^J(\xh, f_2(\xh)) \big] d \xh\quad\text{ as }\quad t\to0. $$
Therefore, 
$$\lim_{t \to 0} \frac{1}{t} \bigg(\int_{E_t} H^J(x) dx - \int_{F_t} H^J(x) dx \bigg) = \int_{R} \big[H^J(\xh, f_1(\xh)) - H^J(\xh, f_2(\xh)) \big] d \xh. $$
Next, we consider the terms
\begin{multline}\label{eq: perimTerms}
    \left(2\int_{E_t}\int_{F_t}-\int_{F_t}\int_{F_t}-\int_{E_t}\int_{E_t}\right) J(x-y)dydx\\
    =\left(\int_{E_t}\int_{F_t}-\int_{E_t}\int_{E_t}\right)J(x-y)dydx
    +\left(\int_{E_t}\int_{F_t}-\int_{F_t}\int_{F_t}\right)J(x-y)dydx
\end{multline}
from~\eqref{eq: perim0}. Focusing on the first difference of integrals, define $I^J_t(x) = \int_{E_t} J(x-y) \, dy$ so
\[
    \left(\int_{E_t} \int_{E_t}-\int_{F_t}\int_{E_t}\right)
        J(x-y)dydx= \int_{E_t} I^J_t(x) dx - \int_{F_t} I^J_t(x) dx.
\]
It is evident that $I^J_t(x)$ is a uniformly continuous function using similar reasoning for $H^J(x)$. Repeating a similar analysis as before yields
$$\lim_{t \to 0} \frac{1}{t} \bigg(\int_{E_t} I^J_t(x) dx - \int_{F_t} I^J_t(x) dx \bigg) = \lim_{t \to 0}  \int_{R'_t} \big[I^J_t(\xh, f_1(\xh)) - I^J_t(\xh, f_2(\xh)) \big] d \xh. $$
We observe that  
$|E_t|, |F_t| \leq \mcl^{n-1}(R) t$ and
$$\int_{E_t} J(x-y) dy \leq \int_{B_\rho(x)} J(x-y) dy = \int_{B_\rho} J(y) dy,$$
where $\rho<r$ is chosen such that $|E_t| = |B_\rho|$, and we used the fact that $J$ is radially symmetric and non-increasing. Here $\rho$ depends only on $t$. As $J$ is integrable we have
$$
\int_{B_\rho} J(y) dy < \epsilon/2, \quad \text{ for }~ t, \text{ hence, } \rho \text{ small}, $$
and $\epsilon$ depends only on $\rho$, and hence on $t$.
This gives us 
$$|I^J_t(\xh, f_1(\xh)) - I^J_t(\xh, f_2(\xh)) | < \epsilon,$$
which finally implies
$$\lim_{t \to 0} \frac{1}{t} \bigg(\int_{E_t} I^J_t(x) dx - \int_{F_t} I^J_t(x) dx \bigg)=  \lim_{t \to 0}  \int_{R'_t} \big[I^J_t(\xh, f_1(\xh)) - I^J_t(\xh, f_2(\xh)) \big] d \xh =0 .$$
For the second difference of integrals in~\eqref{eq: perimTerms}, we can similarly prove
$$\lim_{t \to 0} \frac{1}{t}\left(\int_{F_t} \int_{F_t}- \int_{E_t} \int_{F_t}\right) J(x-y) \, dy \, dx= 0.$$
Collecting all the results, we conclude that
$$0 =  \lim_{t \to 0} \frac{1}{t} \bigg(P^J(\Omega_t) - P^J(\Omega) \bigg) = \int_{R} \big[H^J(\xh, f_1(\xh)) - H^J(\xh, f_2(\xh)) \big] d \xh.$$
By the ordered curvature assumption, $\xh \in R$, $H^J(\xh,f_1(\xh)) \geq H^J(\xh, f_2(\xh))$. This implies 
$$H^J(\xh, f_1(\xh)) = H^J(\xh, f_2(\xh)),$$ for all $\xh \in R$.  
\end{proof}

{Next we provide the proof of Theorem \ref{thm:monotone_implies_equality} which covers a more general case.} 
\begin{proof}{\bf Proof of Theorem \ref{thm:monotone_implies_equality}}
We start with the translation of $\Omega$ in the $x_n$-direction by an amount $t>0$. This gives us 
\begin{align}
\nonumber
    0 =&
    \int_{F_t} H^J(x) dx - \int_{E_t} H^J(x) dx + 2 \int_{E_t}\int_{F_t} J(x-y) dy dx\\
\nonumber
    &\qqqquad\qqqquad- \int_{F_t} \int_{F_t} J(x-y) dy dx- \int_{E_t} \int_{E_t} J(x-y) dy dx\\
\label{eq: initInt}
    =&\left(\int_{F_t}-\int_{E_t}\right) H^J(x)dx\\
\nonumber
    &\qqquad+\left(\int_{E_t}\int_{F_t}- \int_{F_t} \int_{F_t}\right)J(x-y)dydx
        +\left(\int_{E_t}\int_{F_t}- \int_{E_t} \int_{E_t}\right)J(x-y)dydx
\end{align}
We will argue that
\begin{equation}\label{eq: bndCurv}
    \lim_{t\to0^+}\frac{1}{t}\left|\left(\int_{F_t}-\int_{E_t}\right) H^J(x)dx\right|
    =\int_R\sum_{i\in\Pi_{\xh}}\left[H^J(\xh,\overline{x}^i)-H^{J}(\xh,\ux^i)\right]d\xh\ge0
\end{equation}
and that
\begin{equation}\label{eq: extraInt}
    \lim_{t\to0^+}\frac{1}{t}\left(\int_{E_t}\int_{F_t}- \int_{E_t} \int_{E_t}\right)J(x-y)dydx,
    \lim_{t\to0^+}\frac{1}{t}\left(\int_{E_t}\int_{F_t}- \int_{F_t} \int_{F_t}\right)J(x-y)dydx=0.
\end{equation}
It follows then, that for each $i\in\Pi_{\xh}$, we have  $H^J(\xh,\ox^i)=H^J(\xh,\ux_i)$, which is equivalent to the theorem's statement.

Now, let us define, for each $\xh \in \R^{n-1}$, $l_{\xh}(s)$ to be the line segment, parallel to the $x_n$-axis, that runs from $(\xh, -\infty)$ to $(\xh, s)$ for some $s \in \R$. Additionally, define
$$\lambda(\xh, s): = \mathcal{H}^1(l_{\xh}(s) \cap \overline{\Omega}). $$
Then $\lambda(\xh,s)$ is a monotone function of $s$ and hence $\lambda$ is differentiable with respect to $s$ for a.e. $s\in\re$, with
$$\lambda'(\xh,s)=
\begin{cases}
    1, \quad \text{if}~(\xh,s) \in \Omega \\
    0, \quad \text{if}~(\xh,s) \notin \overline{\Omega}. 
\end{cases}
$$
In fact, assumption ($\Omega3$) implies $\lambda'$ exists outside a countable set. Thus, we may identify $\lambda'(\xh,\cdot)=\chi_{\overline{\Omega}}(\xh,\cdot)$ and define
\[
    f_t(\xh, s):=\lambda'(\xh,s-t)-\lambda'(\xh,s)=
    \left\{\begin{array}{rl}
        1, & \text{if}~(\xh,s)
            \in E_t = \overline{\Omega}_t \setminus \overline{\Omega} \\
        -1, & \text{if}~(\xh,s) \in F_t = \overline{\Omega} \setminus \overline{\Omega}_t\\
        0, & \text{if}~(\xh,s)\in \overline{\Omega}\cap\overline{\Omega}_t\\
        0, & \text{if}~(\xh,s)\in\overline{\Omega}^c\cap\overline{\Omega}_t^c.
    \end{array}\right.
\]
With this, we can write
\begin{align*}
    \int_{E_t} H^J(x) dx - \int_{F_t} H^J(x) dx &= \int_{R} \int^\infty_{-\infty} H^J(\xh,s) \big( \lambda'(\xh,s-t)-\lambda'(\xh,s) \big) \, ds \, d\xh \\
    &= \int_{R} \int^\infty_{-\infty} \big(H^J(\xh, s+t) - H^J(\xh,s)\big) \lambda'(\xh,s) \, ds \, d \xh. 
\end{align*}
Let us focus on a fixed $\xh \in R$. Recall $\pinv(\xh) = \bigcup_{i \in \Pi_{\xh}} [\ux^i, \overline{x}^i]$. With $t>0$, for each $i\in\Pi_{\xh}$, set
$$A_i(t): = \pinv_{E_t}(\xh) \cap [\ux^i +t, \overline{x}^i +t ], \quad \text{and}\quad B_i(t): = \pinv_{F_t}(\xh) \cap [\ux^i, \overline{x}^i].$$ 
Put $\alpha_i(t):=\mathcal{H}^1(A_i(t))$ and $\beta_i(t):=\mathcal{H}^1(B_i(t))$. Then we can easily see that $0 \leq \alpha_i(t), \beta_i(t) \leq t$. In fact, we find
\[
    \alpha_i(t)=\int_{\ux^i+t}^{\overline{x}^i+t}f_t(\xh,s)ds
    \quad\text{ and }\quad
    \beta_i(t)=-\int_{\ux^i}^{\overline{x}^i}f_t(\xh,s)ds.
\]
Note that, since $[\ux^i+t,\overline{x}^i+t]\subseteq\overline{\Omega}_t$ and $[\ux^i,\overline{x}^i]\subseteq\overline{\Omega}$, the sign of $f_t(\xh,\cdot)$ cannot change over the intervals of integration. Moreover, $\alpha_i(t)=t>0$ if and only if $\overline{x}^i-\ux^i\ge t$ and $\overline{x}^i+t\le\ux^j$ for all $\ux^j>\overline{x}^i$. Similarly, $\beta_i(t)=t>0$ if and only if $\overline{x}^i-\ux^i\ge t$ and $\ux^i\ge\overline{x}^j+t$ for all $\overline{x}^j<\ux^i$.
We see that if $t_0>0$ and $\alpha_i(t_0)=t_0$, then $\alpha_i(t)=t$ for all $0\le t\le t_0$, similarly for $\beta_i$. In particular, if $\ux_i = \overline{x}_i$ for some $i \in \Pi_{\xh}$ (isolated points) then $\alpha_i(t) = \beta_i(t)=0$, for all $t\ge0$. For $t>0$, we introduce
\[
    \Gamma^1_{\xh}(t):=\{i\in\Pi_{\xh}:\alpha_i(t)=t\}
    \quad\text{ and }\quad
    \Gamma^2_{\xh}(t):=\{i\in\Pi_{\xh}:\beta_i(t)=t\}
\]
We also define $\Gamma_{\xh}:=\Gamma^1_{\xh}\cap\Gamma^2_{\xh}$. From the oberservations above, we see that $N_k(t):=\text{card}\left(\Gamma^k_{\xh}(t)\right)$, $k=1,2$, are nondecreasing and that $N(t):=\text{card}\left(\Gamma_{\xh}(t)\right)\le\min\left\{N_1(t),N_2(t)\right\}$. Finally, we set and $\Pi^+_{\xh}=\{i\in\Pi_{\xh}:\ux^i<\overline{x}^i\}$.

We will argue that
\begin{equation}\label{eq: alphaBetaLimit}
    \sum_{i \in \Pi^+_{\xh}}\left[\alpha_i(t) + \beta_i(t)\right] \to 0 ~\text{as}~ t \to 0.
\end{equation}
Recalling that $f_t(\xh, x_n) = \lambda'(\xh,x_n-t)-\lambda'(\xh,x_n)$, we see that
\begin{equation}\label{eq: alphaBetaSum}
    \mathcal{H}^1\left(\{s: |f_t(\xh, s)|>1/2\}\right)
    = \sum_{i \in \Pi_{\xh}}[\alpha_i(t) + \beta_i(t)]
    = \sum_{i \in \Pi^+_{\xh}} [\alpha_i(t) + \beta_i(t)].
\end{equation}
The claim will follow from showing that $f_t(\xh,\cdot)\to 0$ in measure, as $t\to0^+$. To this end, set $S=\{s:|f_t(\xh,s)|>0\text{ for all }t>0\}$. If $s\in\pinv_{\Omega}(\xh)\cup\pinv_{\overline{\Omega}^c}(\xh)$, then
\[
    t'=\min\left\{\inf_{i\in\Pi_{\xh}}|\overline{x}^i-s|,
        \inf_{i\in\Pi_{\xh}}|\ux^i-s|\right\}>0,
\]
implying $f_t(\xh,s)=0$ for each $0<t<t'$. Thus, $S$ is contained in $\{\overline{x}^i:i\in\Pi_{\xh}\}\cup\{\ux^i:i\in\Pi_{\xh}\}$, which is countable and has measure zero. Since $f_t \to 0$ pointwise a.e. and there is a set of finite measure that contains $\bigcup_{0<t<1}\text{supp}\left(f_t(\xh,\cdot)\right)$, we conclude that $f_t\to0$ in measure. In view of~\eqref{eq: alphaBetaSum}, we see that~\eqref{eq: alphaBetaLimit} must be true.


To continue, recall that $0\le\alpha_i(t),\beta_i(t)\le t$, for all $i\in\Pi_{\xh}$ and $t\ge0$. Therefore,~\eqref{eq: alphaBetaLimit} implies
$$\lim_{t \to 0} t N_1(t)  \leq \lim_{t \to 0} \sum_{i \in \Pi^+_{\xh}} \alpha_i(t) =0\quad\text{ and } \quad \lim_{t \to 0} t N_2(t)  \leq \lim_{t \to 0} \sum_{i \in \Pi^+_{\xh}} \beta_i(t)=0. $$

Now, define 
\[
    g_t(\xh, s): = \sum_{i \in \Gamma_{\xh}(t)} \chi_{[\ux^i , \overline{x}^i]} (\xh, s) \quad \text{and}\quad
    g_0(\xh, s): = \sum_{i \in \Pi^+_{\xh}} \chi_{[\ux^i , \overline{x}^i]} (\xh, s).
\]
Arguing as we did to show $f_t(\xh,\cdot)\to 0$ in measure, we can verify that $g_t(\xh,\cdot)\to g_0(\xh,\cdot)$ in measure, as $t\to0^+$. In particular,
\begin{equation}\label{eq: gConvMeas}
    \lim_{t \to 0} |\{s: |g_t(\xh,s) - g_0(\xh,s)|>1/2 \}|
    = \lim_{t \to 0}  \sum_{i \in \Pi^+_{\xh} \setminus \Gamma_{\xh} (t)}|\overline{x}^i - \ux^i| = 0.
\end{equation}
With the preliminaries above, we turn to establishing~\eqref{eq: bndCurv} and~\eqref{eq: extraInt}.




First, we show~\eqref{eq: bndCurv}. We have
\begin{align}
\nonumber
    &\int_{E_t} H^J(x) dx - \int_{F_t} H^J(x) dx
    = \int_{R} \int^\infty_{-\infty} \big(H^J(\xh, s+t) - H^J(\xh,s)\big) \lambda'(\xh,s)  ds  d \xh \\
\nonumber
    &\qquad= \int_R \sum_{i \in \Pi^+_{\xh}} \int^{\overline{x}^i}_{\ux^i} \big(H^J(\xh, s+t) - H^J(\xh,s)\big)  ds d \xh \\
\label{eq: integrals1}
    &\qquad= \int_R \sum_{i \in \Gamma_{\xh}(t)} \int^{\overline{x}^i}_{\ux^i} \big(H^J(\xh, s+t) - H^J(\xh,s)\big)   ds d \xh\\
\nonumber
    &\qquad\qquad\qquad\qquad+ \int_R \sum_{i \in \Pi^+_{\xh} \setminus \Gamma_{\xh}(t)} \int^{\overline{x}^i}_{\ux^i} \big(H^J(\xh, s+t) - H^J(\xh,s)\big)  ds  d \xh.
\end{align}
For the last expression, the uniform continuity of $H^J$ implies
$$\frac{1}{t}\bigg|\int_R \sum_{i \in \Pi^+_{\xh} \setminus \Gamma_{\xh}(t)} \int^{\overline{x}^i}_{\ux^i} \big(H^J(\xh, s+t) - H^J(\xh,s)\big)  \, ds \, d \xh \bigg| \leq \int_R \sum_{i \in \Pi^+_{\xh} \setminus \Gamma_{\xh}(t)}  C(\overline{x}^i - \ux^i) \, d \xh. $$
Using~\eqref{eq: gConvMeas}, we conclude that
\[
    \lim_{t\to0^+}\frac{1}{t}\left|\int_R \sum_{i \in \Pi^+_{\xh} \setminus \Gamma_{\xh}(t)} \int^{\overline{x}^i}_{\ux^i} \left(H^J(\xh, s+t) - H^J(\xh,s)\right)  \, ds \, d \xh \right|
    =0
\]
We not turn to the first interated integral in~\eqref{eq: integrals1}. Focusing on the inner integral, we can write
\begin{align*}
    &\int^{\overline{x}^i}_{\ux^i} \big(H^J(\xh, s+t) - H^J(\xh,s)\big)  \, ds
    = \int^{\overline{x}^i + t}_{\ux^i + t} H^J(\xh, s) \, ds - \int^{\overline{x}^i}_{\ux^i} H^J(\xh, s) \, ds \\
    &\qqquad= \int^{\overline{x}^i + t}_{\overline{x}^i} H^J(\xh,s) \, ds - \int^{\ux^i + t}_{\ux^i} H^J(\xh,s) \, ds \\
    &\qqquad= \int^t_0 \big[H^J(\xh, s + \overline{x}^i)- H^J(\xh, \overline{x}^i) \big] \, ds + \int^t_0 \big[H^J(\xh, s + \ux^i)- H^J(\xh, \ux^i) \big] \, ds \\
    &\qqqquad\qqqquad\qqqquad\qquad + t \big[H^J(\xh, \overline{x}^i) - H^J(\xh, \ux^i) \big]. 
\end{align*}
The uniform continuity of $J^J$ provides the following bounds:
$$ \frac{1}{t} \left|\int^t_0 \left[H^J(\xh, s + \overline{x}^i)- H^J(\xh, \overline{x}^i) \right] \, ds\right|, \frac{1}{t} \left|\int^t_0 \left[H^J(\xh, s + \ux^i)- H^J(\xh, \ux^i)\right] \, ds \right| \leq Ct.$$
Returning to~\eqref{eq: integrals1}, we use the facts that 
$$\H^{n-1}(R) < \infty,\quad\lim_{t \to 0} t N(t) = 0,\quad\text{ and }\quad \lim_{t \to 0}  \sum_{i \in \Pi^+_{\xh} \setminus \Gamma_{\xh} (t)}|\overline{x}^i - \ux^i| = 0$$
to conclude that~\eqref{eq: bndCurv}

For the first difference of integrals in~\eqref{eq: extraInt},
we perform a very similar analysis as in the Lemma~\ref{lem: simple}. To start, we write
$$\lim_{t \to 0} \frac{1}{t} \bigg(\int_{E_t} \int_{E_t} J(x-y) \,dy \, dx - \int_{F_t} \int_{E_t} J(x-y) \, dy \, dx \bigg) =\lim_{t \to 0} \int_R \sum_{i \in \Gamma_{\xh}(t)}  \big[I^J_t(\xh, \overline{x}^i) - I^J_t(\xh, \ux^i) \big] \, d \xh,$$
where we recall that $I^J_t(x)= \int_{E_t} J(x-y) dy$. Next, observe that
\begin{equation}\label{equal3}
    \int_R \sum_{i \in \Gamma_{\xh}(t)}  \big[I^J_t(\xh, \overline{x}^i) - I^J_t(\xh, \ux^i) \big] \, d \xh 
    =\int_R \sum_{i \in \Gamma_{\xh}(t)}  (\overline{x}^i - \ux^i) \frac{\big[I^J_t(\xh, \overline{x}^i) - I^J_t(\xh, \ux^i) \big]}{(\overline{x}^i - \ux^i) } d \xh. 
\end{equation}
For a fixed $t>0$, if $i \in \Gamma_{\xh}(t)$, then $\overline{x}^i - \ux^i \geq t$. Performing similar computations for $I^{J}_t$ as in~\eqref{eq:HLip} for $H^J$ (under either assumption (J4a) or (J4b)), we have
$$ \bigg|\frac{I^J_t(\xh, \overline{x}^i) - I^J_t(\xh, \ux^i) }{\overline{x}^i - \ux^i } \bigg| \leq C,$$
and hence, for all $t>0$ and $\xh \in R$,
$$\sum_{i \in \Gamma_{\xh}(t)}  (\overline{x}^i - \ux^i) \bigg|\frac{\big[I^J_t(\xh, \overline{x}^i) - I^J_t(\xh, \ux^i) \big]}{(\overline{x}^i - \ux^i) } \bigg| \leq  C \sum_{i \in \Pi^+_{\xh}}(\overline{x}^i - \ux^i) \leq C \diam(\Omega).$$
Notice that $|E_t| = \int_R \sum_{i \in \Pi^+_{\xh}} \alpha_i(t) \,  d \xh$, and we proved that 
$\lim_{t \to 0}\sum_{i \in \Pi_{\xh}} \alpha_i(t) = 0 $. Thus, for each $\xh\in R$ and $i \in \Pi^+_{\xh}$, 
$$ \lim_{t \to 0} \frac{I^J_t(\xh, \overline{x}^i) - I^J_t(\xh, \ux^i) }{\overline{x}^i - \ux^i } = 0.$$
Using the Dominated Convergence Theorem in \eqref{equal3}, we deduce that
$$\lim_{t \to 0} \int_R \sum_{i \in \Gamma_{\xh}(t)}  \big[I^J_t(\xh, \overline{x}^i) - I^J_t(\xh, \ux^i) \big] \, d \xh =0. $$
The same argument also applies to $\int_{F_t} \int_{F_t} J(x-y) \,dy \, dx - \int_{E_t} \int_{F_t} J(x-y) \, dy \, dx$.

In conclusion, we have shown that
$$0 =  \lim_{t \to 0} \frac{1}{t} \bigg(P^J(\Omega_t) - P^J(\Omega) \bigg) = \int_R \sum_{i \in \Pi_{\xh}}  \big[H^J(\xh, \overline{x}^i) - H^J(\xh, \ux^i) \big] \, d \xh.$$
Under the ordered curvature assumption, $H^J(\xh, \overline{x}^i) {\geq} H^J(\xh, \ux^i)$.  Therefore, $H^J(\xh, \overline{x}^i) = H^J(\xh, \ux^i)$, for each $i \in \Pi_{\xh}$ and $\xh \in R$, as needed.
\end{proof}

\section{The Moving Plane Method}
Next we introduce a generalization of Alexandrov's moving plane method in the context of nonlocal ordered curvature. The celebrated moving plane method was introduced by Alexandrov in 1960 (\cite{Alexandrov}) to prove the fact that a bounded $C^2$ surface with constant mean curvature is a sphere. Later, it was used by many others to prove similar symmetry results. Thus, it was employed to show that a bounded connected set whose  surface has constant nonlocal mean curvature is a ball \cite{Bucur1, Bucur2, Cabre-Fall-Sola-Weth, Figalli}. The regularity of the surface in these works varies from $C^{1,\alpha}$ to simply measurability, depending on the singularity of the kernel. For this method, one takes the reflection of $\Omega$ with respect to a hyperplane $x_n = \lambda$ for some $\lambda \in \R$; we denote the reflection by $R_\lambda(\Omega)$. The hyperplane $x_n = \lambda$ is labeled $\HP_\lambda$. We denote by $\HP^+_\lambda = \{(\xh, x_n): x_n > \lambda  \}$, $\Omega^+_\lambda = \Omega \cap \HP^+_\lambda$, $R^+_\lambda(\Omega) = R_{\lambda}(\Omega) \cap \HP^+_\lambda$.
We may assume $\overline{\Omega}\subseteq\HP^+_{0}$.
\begin{defn}\label{D:critical_plane}
    $\HP_{\lambda_0}$ is called a critical plane if the following hold 
    \begin{itemize}
          \item For all $0<\lambda< \lambda_0$, we have $\ROmCl \subseteq \Omega$.
        \item There exists a decreasing sequence $\lambda_k \to \lambda_0$ such that for all $k \in \mathbb{N}$, $\ROmClk \setminus \Omega \neq \emptyset$. 
    \end{itemize}
\end{defn}
 Without loss of generality we can assume that $\lambda_0 = 0$. For an arbitrary point $x$ we denote by $\tx$ its reflection in the positive  half-space $\HP^+_0$.
 \begin{lem}
We have    $R^+_0(\overline{\Omega}) \setminus \Omega = R^+_{0}(\partial \Omega) \cup \partial \Omega $.
\end{lem}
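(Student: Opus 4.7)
The plan is to leverage the criticality of $\HP_0$ to first derive the closure-style inclusion $R^+_0(\overline{\Omega}) \subseteq \overline{\Omega}$, then decompose both sides of the asserted identity along $\overline{\Omega} = \Omega \cup \partial \Omega$, and finally pin down each piece using the two bullets of Definition~\ref{D:critical_plane}.

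First I would show $R^+_0(\overline{\Omega}) \subseteq \overline{\Omega}$ by a limiting argument. Given $x \in R^+_0(\overline{\Omega})$, set $y = R_0(x) \in \overline{\Omega} \cap \HP^-_0$. For $\lambda<0$ with $|\lambda|$ sufficiently small (so that $y_n < \lambda$), the point $R_\lambda(y)$ lies in $\HP^+_\lambda$, so $R_\lambda(y) \in R^+_\lambda(\overline{\Omega}) \subseteq \Omega$ by the first bullet of Definition~\ref{D:critical_plane}. Since $R_\lambda(y) \to R_0(y) = x$ as $\lambda \to 0^-$, we conclude $x \in \overline{\Omega}$. With this in hand, the forward inclusion is essentially formal: any $x \in R^+_0(\overline{\Omega}) \setminus \Omega$ lies in $\overline{\Omega} \setminus \Omega = \partial \Omega$, and simultaneously, writing $x = R_0(y)$ with $y \in \overline{\Omega}\cap\HP^-_0$, we have either $y \in \partial \Omega$ (in which case $x \in R^+_0(\partial \Omega)$) or $y \in \Omega$ (in which case the $\partial \Omega$ membership alone places $x$ in the right-hand side). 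Either alternative lands $x$ in $R^+_0(\partial \Omega) \cup \partial \Omega$.

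For the reverse inclusion I would treat the two pieces separately. For $x \in R^+_0(\partial \Omega)$, the containment $x \in R^+_0(\overline{\Omega})$ is immediate from $\partial \Omega \subseteq \overline{\Omega}$, so the burden is to rule out $x \in \Omega$: if $x$ were an interior point of $\Omega$, openness would give a ball $B_\varepsilon(x) \subseteq \Omega$, and reflecting across $\HP_\lambda$ for $\lambda$ slightly above $\lambda_0 = 0$ would preserve containment in $\Omega$ in a neighborhood of $x$, contradicting the second bullet of Definition~\ref{D:critical_plane} localized near $x$. For the portion of $\partial \Omega$ relevant to the identity (i.e., implicitly restricted to $\HP^+_0$, where the left-hand side lives), the condition $x \notin \Omega$ is automatic and the sole task is to produce $R_0(x) \in \overline{\Omega}$; this is again extracted from the criticality, by arguing that otherwise the plane could be pushed past $\lambda_0$ while maintaining the containment $R^+_\lambda(\overline{\Omega}) \subseteq \Omega$, contradicting the failing sequence $\lambda_k \to 0^+$.

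The principal technical obstacle is precisely this reverse inclusion. The forward direction amounts to combining $\overline{\Omega} = \Omega \cup \partial \Omega$ with the critical-plane containment, so it requires no finesse. The reverse direction, however, demands converting the \emph{pointwise} failure of containment along $\lambda_k \to \lambda_0^+$ into a \emph{structural} description of the contact locus; in particular, it must exclude both the possibility that a reflected boundary point slips into the interior of $\Omega$ and the possibility that an upper boundary point goes unmet by the reflection. The indispensable input in each case is the failing sequence from Definition~\ref{D:critical_plane}, which provides just enough obstruction to block any local perturbation beyond $\lambda_0 = 0$.
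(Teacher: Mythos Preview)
Your forward inclusion is sound and close to the paper's: the paper likewise uses the first bullet of Definition~\ref{D:critical_plane} (at a single nearby $\lambda<0$ rather than via a limit) to obtain $R^+_0(\overline{\Omega})\subseteq\overline{\Omega}$. The paper goes one step further than you do, however: it also rules out the alternative ``$y\in\Omega$'' in your dichotomy, showing $R^+_0(\Omega)\cap\partial\Omega=\emptyset$ by the same device (if $y\in\Omega$, pick a slightly lower $\lambda_1<0$ with $y'\in\Omega$ and observe $R_{\lambda_1}(y')=x\in\partial\Omega$, contradicting $R^+_{\lambda_1}(\overline\Omega)\subseteq\Omega$). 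This lands the left-hand side in $R^+_0(\partial\Omega)\cap\partial\Omega$ rather than the union, which is exactly what is used in case~(a) immediately after the lemma.

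Your reverse inclusion has a genuine gap. The second bullet of Definition~\ref{D:critical_plane} is a \emph{global} obstruction: for each $\lambda_k$ it guarantees that \emph{some} point of $R^+_{\lambda_k}(\overline{\Omega})$ escapes $\Omega$, with no control over where that point lies. Your argument attempts to ``localize'' this near a prescribed $x$, but there is no mechanism to force the escaping points near $x$; showing that reflections of a small neighborhood of $x$ remain inside $\Omega$ produces no contradiction, since the sequence $\lambda_k$ can be witnessed elsewhere on $\partial\Omega$. The same objection applies to your second piece. In fact the reverse inclusion, with $\cup$ as literally written, is simply false: in the non-transversal situation (case~(b) following the lemma) one has $R^+_0(\overline\Omega)\setminus\Omega=\emptyset$, while $\partial\Omega\cap\HP^+_0\neq\emptyset$. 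The lemma statement carries a typo --- the intended right-hand side is $R^+_0(\partial\Omega)\cap\partial\Omega$ (as used in case~(a)), for which $\supseteq$ is immediate from $R^+_0(\partial\Omega)\subseteq R^+_0(\overline\Omega)$ and $\partial\Omega\cap\Omega=\emptyset$, and the paper accordingly only argues $\subseteq$.
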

\begin{proof}
    We only need to show that $R^+_0(\overline{\Omega}) \setminus \Omega \subseteq R^+_{0}(\partial \Omega) \cup \partial \Omega$. If $R^+_0(\overline{\Omega}) \setminus \Omega = \emptyset$, we are done. Otherwise, we notice: 
    $$ \ROmlO \setminus \Omega =  \big(\ROmlO \setminus \overline{\Omega} \big) \cup \big(\ROmlO \cap \partial{\Omega} \big) = \big(\ROmlO \setminus \overline{\Omega} \big) \cup \big(R^+_0(\Omega) \cap \partial{\Omega} \big) \cup  \big(R^+_{0}(\partial \Omega) \cup \partial \Omega \big).$$
    For $\tx =(\widehat{\tx},\tx_n) \in \ROmlO \setminus \overline{\Omega}$ there exists $\lambda_1<0$ such that $(\widehat{\tx}, \tx_n + 2\lambda_1)\in \ROmlO \setminus \overline{\Omega}$. As $x= (\widehat{\tx},-\tx_n)$, we have $R_0(x) = \tx$. However, $(\widehat{\tx}, \tx_n + 2\lambda_1) = R_{\lambda_1}(x)$, which contradicts the fact that $\lambda =0$ is the critical hyperplane. 

    On the other hand, if $\tx \in \big(R^+_0(\Omega) \cap \partial{\Omega} \big)$, then there exists $x=(\widehat{\tx},-\tx_n) \in \Omega \cap \HP^-_0$ such that $R_0(x) = \tx$. Since $\Omega \cap \HP^-_0$ is an open set, we can choose $\lambda_1<0$, such that $(\widehat{\tx}, -\tx_n + 2 \lambda_1) \in \Omega \cap \HP^-_0$ which further implies $R_{\lambda_1}(\widehat{\tx}, -\tx_n + 2 \lambda_1) = \tx$. This contradicts the fact that $\lambda=0$ is the critical hyperplane, since $R_{\lambda_1}(\widehat{\tx}, -\tx_n + 2 \lambda_1) \in \ROmlO \setminus \Omega$.
 \end{proof}
Based on this lemma, we consider the following two cases:
\begin{enumerate}[(a)]
    \item Whenever $\tx \in \ROmlO \setminus \Omega = R^+_0(\partial \Omega) \cap \partial \Omega$, we say that $\tx$ is an interior touching point.
    \item On the other hand, if $\ROmlO \setminus \Omega = \emptyset$ then we have a non-transversal intersection.
\end{enumerate}

 \begin{lem}
     Consider $\{\lambda_k\}$ to be the decreasing sequence such that $\lambda_k \to 0$ and for all $k \in \mathbb{N}$, $\ROmClk \setminus \Omega \neq \emptyset$.
     If there exists $\lambda^* >0$ and a subsequence $\{\lambda_{k_l} \} \subseteq \{\lambda_{k} \}$ such that for all $l \in \mathbb{N}$,
     $$ \ROmClkl \setminus (\HP^-_{\lambda^*} \cup \Omega) \neq \emptyset,$$
     then there is an interior touching point on $\partial \Omega$.
 \end{lem}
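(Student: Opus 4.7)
The plan is to construct an interior touching point as a limit of witness points supplied by the hypothesis, using compactness of $\overline{\Omega}$ and continuity of the reflection map $R_\lambda$ in both of its arguments. The key mechanism is that the uniform lower bound $(\tx_l)_n\ge\lambda^*>0$ prevents the limiting point from collapsing onto the critical plane $\HP_0$, which is precisely the non-transversal intersection scenario of case (b).

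For each $l\in\N$, I would select $\tx_l\in\ROmClkl\setminus(\HP^-_{\lambda^*}\cup\Omega)$; by the definition of $R^+_{\lambda_{k_l}}(\overline{\Omega})$, one may write $\tx_l=R_{\lambda_{k_l}}(x_l)$ for some $x_l\in\overline{\Omega}$. The compactness of $\overline{\Omega}$ furnishes, after passage to a subsequence (not relabeled), a point $x^*\in\overline{\Omega}$ with $x_l\to x^*$. Since $\lambda_{k_l}\to 0$ and the map $(\lambda,x)\mapsto R_\lambda(x)$ is jointly continuous, $\tx_l=R_{\lambda_{k_l}}(x_l)\to R_0(x^*)=:\tx^*$.

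Two closed conditions then pass to the limit. First, $\Omega^c$ is closed (since $\Omega$ is open), so $\tx_l\in\Omega^c$ forces $\tx^*\in\Omega^c$. Second, $(\tx_l)_n\ge\lambda^*>0$ for every $l$, so $(\tx^*)_n\ge\lambda^*>0$ and in particular $\tx^*\in\HP^+_0$. Together these yield $\tx^*\in R_0(\overline{\Omega})\cap\HP^+_0\setminus\Omega=\ROmlO\setminus\Omega$. Invoking the preceding lemma, combined with the identification recorded in case (a), we conclude that $\tx^*\in R^+_0(\partial\Omega)\cap\partial\Omega$, so $\tx^*$ is the desired interior touching point.

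I do not anticipate any serious technical obstacle beyond keeping careful track of the continuity of $R_\lambda$ in the plane parameter $\lambda$, a short verification since $R_\lambda(x)=(\widehat{x},2\lambda-x_n)$. Conceptually, the heart of the argument is the role played by $\lambda^*$: its strict positivity is exactly what excludes the degenerate situation where the limiting touching point lies on $\HP_0$, since without it one could only conclude $(\tx^*)_n\ge 0$, which would be consistent with the non-transversal case rather than with the existence of an interior touching point.
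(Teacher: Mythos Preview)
Your proposal is correct and follows essentially the same compactness argument as the paper: select witness points $\tx_l$, pass to a convergent subsequence using boundedness of $\overline{\Omega}$, and use closedness of $\Omega^c$ together with the lower bound $(\tx_l)_n\ge\lambda^*$ to place the limit in $\ROmlO\setminus\Omega$. The only cosmetic difference is that you extract the subsequence from the preimages $x_l\in\overline{\Omega}$ and then invoke continuity of $(\lambda,x)\mapsto R_\lambda(x)$, whereas the paper extracts it directly from the $\tx_l$; the content is identical.
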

\begin{proof}
From the given assumption, we have for every $l \in \mathbb{N}$, there exists $\Tilde{x}_l =(\Tilde{\xh}_l, \Tilde{x}_{l,n}) \in \ROmClkl \setminus \Omega$ such that $\Tilde{x}_{l,n} \geq \lambda^*$. Since $\cup_{l=1}^\infty R_{\lambda_{k_l}}(\overline{\Omega}) \subseteq \R^n$ is a bounded set, there is a subsequence (using the same index $l$) $\Tilde{x}_l$ such that $\lim_{l \to \infty} \Tilde{x}_l =\Tilde{x}_0$. We notice that $\Tilde{x}_0 \in \Omega^c$ and $\Tilde{x}_{0,n} \geq \lambda^*$. Next we see that for those $l \in \mathbb{N}$, there exists $x_l = (\xh_l, x_{l,n}) \in \overline{\Omega}$ such that $x_{l,n} = 2 \lambda_{k_l} - \Tilde{x}_{l,n}$. Since $\lambda_{k_l} \to 0$ as $l \to \infty$, and $\overline{\Omega}$ is closed, $x_l$ has a converging subsequence in $\overline{\Omega}$. Let that sequence converge to $x_0 \in \overline{\Omega}$. As we can write that $x_{0,n} = 0-\Tilde{x}_{0,n}$ we have that $x_0 \in \HP^-_0$, hence $\Tilde{x}_0 \in R^+_0(\overline{\Omega})$. Thus, we conclude that $\Tilde{x}_0 \in \R^+_0(\overline{\Omega}) \setminus \Omega$.
\end{proof}

\begin{lem}\label{lem:interior_constant_kernel}
    Suppose there exists $x_0 \in \overline{\Omega} \cap \HP^-_0$ such that $\Tilde{x}_0 = R_0(x_0) \in R^+_0(\overline{\Omega}) \setminus \Omega$, {where $\Omega$ satisfies assumption \ref{assum_domain}}  and $J(x) = \chi_{B_r}(x)$ (assumption (J4a)), then
    $$|(\Omega \setminus R_0(\Omega)) \cap (B_r(\Tilde{x}_0)\setminus B_r(x_0))| =  |(  R_0(\Omega) \setminus \Omega) \cap (B_r(\Tilde{x}_0) \setminus B_r(x_0))|= 0.$$
\end{lem}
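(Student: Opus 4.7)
Write $x_0=(\widehat{x},-h)$ and $\tilde{x}_0=(\widehat{x},h)$ for some $h\ge 0$; the case $h=0$ is trivial. The plan is to establish the measure identity
\[
    |B_r(\tilde{x}_0)\cap\Omega| - |B_r(x_0)\cap\Omega| = |(B_r(\tilde{x}_0)\setminus B_r(x_0)) \cap (\Omega\setminus R_0(\Omega))|,
\]
and then to force both sides to vanish --- the right-hand side being exactly the first quantity in the lemma. The second equality of the lemma will come essentially for free from the critical plane property.

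I begin by recording two structural facts. A direct computation with $|y-\tilde{x}_0|^2-|y-x_0|^2=-4hy_n$ yields $B_r(\tilde{x}_0)\setminus B_r(x_0)\subseteq\HP^+_0$, $B_r(x_0)\setminus B_r(\tilde{x}_0)\subseteq\HP^-_0$, and $B_r(\tilde{x}_0)\cap\HP^-_0\subseteq B_r(x_0)$; in particular, the lens $B_r(\tilde{x}_0)\cap B_r(x_0)$ is symmetric under $R_0$. Passing to the limit $\lambda\to 0^+$ in the critical plane inclusion $R^+_\lambda(\overline{\Omega})\subseteq\Omega$ gives $R^+_0(\overline{\Omega})\subseteq\overline{\Omega}$, which, modulo null sets, says
\[
    P:=\Omega\setminus R_0(\Omega)\subseteq\HP^+_0 \quad\text{and}\quad Q:=R_0(\Omega)\setminus\Omega\subseteq\HP^-_0.
\]
The second equality of the lemma is then immediate: $Q\cap(B_r(\tilde{x}_0)\setminus B_r(x_0))\subseteq\HP^-_0\cap\HP^+_0$ has measure zero. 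To obtain the displayed identity, decompose $\Omega=(R_0(\Omega)\cup P)\setminus Q$ (disjointly) and intersect with $B_r(\tilde{x}_0)$; using the reflection identity $|B_r(\tilde{x}_0)\cap R_0(\Omega)|=|B_r(x_0)\cap\Omega|$, the relation $R_0(P)=Q$, and the $R_0$-symmetry of the lens (which forces $|P\cap B_r(\tilde{x}_0)\cap B_r(x_0)|=|Q\cap B_r(\tilde{x}_0)\cap B_r(x_0)|$), together with the fact that $Q\cap B_r(\tilde{x}_0)\subseteq B_r(\tilde{x}_0)\cap\HP^-_0\subseteq B_r(x_0)$ (so all of $Q$'s mass in $B_r(\tilde{x}_0)$ sits inside the lens), a short calculation produces the identity above.

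To close the argument, it suffices to show the left-hand side is non-positive, since the right-hand side is a non-negative measure. Equivalently, we need $H^J(x_0)\le H^J(\tilde{x}_0)$. The main obstacle --- and in my view the subtlest point of the proof --- is that to apply Theorem~\ref{thm:monotone_implies_equality} and conclude $H^J(x_0)=H^J(\tilde{x}_0)$, one must first verify that $x_0\sim\tilde{x}_0$, i.e., that the vertical segment $\{\widehat{x}\}\times[-h,h]$ lies in $\overline{\Omega}$. The idea is to exploit the reflection symmetry on the fiber $\pinv_{\overline{\Omega}}(\widehat{x})$: the critical plane property implies that if $s\in\pinv_{\overline{\Omega}}(\widehat{x})\cap(-\infty,0]$ then $-s\in\pinv_{\overline{\Omega}}(\widehat{x})$, and both $\pm h$ already lie in this fiber; together with assumption ($\Omega 3$) that $\pinv_{\overline{\Omega}}(\widehat{x})$ is a countable union of closed intervals, this forces the component of the fiber containing $-h$ to extend past $0$ and thus to join, via reflection, with the component containing $h$, so that $[-h,h]$ belongs to a single closed interval of $\pinv_{\overline{\Omega}}(\widehat{x})$. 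With $x_0\sim\tilde{x}_0$ in hand, Theorem~\ref{thm:monotone_implies_equality} yields $H^J(x_0)=H^J(\tilde{x}_0)$, the identity's left-hand side vanishes, and the first conclusion follows.
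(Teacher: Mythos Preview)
Your overall strategy coincides with the paper's: show $x_0\sim\tilde{x}_0$, invoke Theorem~\ref{thm:monotone_implies_equality} to get $H^J(x_0)=H^J(\tilde{x}_0)$, and translate this curvature equality into the measure statement. Your set-theoretic decomposition producing the displayed identity is equivalent to the paper's computation $0=H^J_{R_0(\Omega)}(x_0)-H^J_\Omega(x_0)=2\int_{\Omega\setminus R_0(\Omega)}\bigl(J(x_0-y)-J(x_0-R_0(y))\bigr)\,dy$; both hinge on $P=\Omega\setminus R_0(\Omega)\subseteq\HP^+_0$ modulo a null set.

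There is, however, a gap in the step you flag as subtlest. From the critical plane property you extract only the limiting statement ``$s\in\pinv_{\overline{\Omega}}(\xh)\cap(-\infty,0]\Rightarrow -s\in\pinv_{\overline{\Omega}}(\xh)$'' and then try to combine this with ($\Omega3$). That is not enough: a fiber such as $[-2,-1]\cup[1,3]$ with $h=2$ satisfies the implication yet does not contain $[-h,h]$. The correct argument uses the inclusion $R^+_\lambda(\overline{\Omega})\subseteq\Omega$ for \emph{every} $\lambda<0$ (note: $\lambda\to0^-$, not $0^+$, once $\lambda_0=0$). Reflecting $(\xh,-h)\in\overline{\Omega}$ at each $\lambda\in(-h,0)$ gives $(\xh,2\lambda+h)\in\Omega$, so $\{\xh\}\times(-h,h)\subseteq\Omega$ directly---this is precisely the paper's one-line justification that a gap on the segment would contradict criticality. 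No appeal to ($\Omega3$) is needed.

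One final remark: you correctly observe that the second equality, as stated in the lemma, is trivial because $Q\subseteq\HP^-_0$ while $B_r(\tilde{x}_0)\setminus B_r(x_0)\subseteq\HP^+_0$. The paper's proof actually concludes with the symmetric (and non-trivial) statement $|(R_0(\Omega)\setminus\Omega)\cap(B_r(x_0)\setminus B_r(\tilde{x}_0))|=0$, which is what is used downstream in Lemma~\ref{lem: interior}.
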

\begin{proof}
    Using the Definition \ref{D:critical_plane} of the critical plane and previous lemmas, we see that $x_0 \sim \Tilde{x}_0$. Otherwise, there would be a point on the line segment joining $x_0$ and $\Tilde{x}_0$ that does not belong to $\overline{\Omega}$, which would contradict the fact that $\lambda=0$ is the critical hyperplane. Hence, $H^J(x_0) = H^J(\Tilde{x}_0)$, {due to Theorem \ref{thm:monotone_implies_equality}}. Then, we have
\begin{align}\label{eq:interior_touching}
        0 &= H^J(R_0(x_0)) - H^J(x_0)
        = H^J_{R_0(\Omega)}(x_0) - H_\Omega^J(x_0) \\ \nonumber
        &= 2 \int_{\Omega \setminus R_0(\Omega)} J(x_0-y) dy -2 \int_{R_0(\Omega) \setminus \Omega} J(x_0 -y) dy \\ \nonumber
        &= 2 \int_{\Omega \setminus R_0(\Omega)} \big(J(x_0-y)-J(x_0 - R_0(y)) dy \\ \nonumber
        &= -2|(\Omega \setminus R_0(\Omega))\cap (B_r(\tx_0) \setminus B_r(x_0)|. 
    \end{align}
    Hence, $|(\Omega \setminus R_0(\Omega))\cap (B_r(\tx_0) \setminus B_r(x_0)|=0$ and similarly $|(R_0(\Omega) \setminus \Omega)\cap (B_r(x_0) \setminus B_r(\tx_0)|=0$.
    Thus, we have our claim. 
\end{proof}

\begin{lem}
    Suppose that there exists $x_0 \in \overline{\Omega} \cap \HP^-_0$ such that $\Tilde{x}_0 = R_0(x_0) \in R^+_0(\overline{\Omega}) \setminus \Omega$, {where $\Omega$ satisfies assumption \ref{assum_domain}} and (J4b) holds. Then
    $$|(\Omega \setminus R_0(\Omega)) \cap B_r(\tilde{x}_0)| =  |(  R_0(\Omega) \setminus \Omega) \cap B_r(x_0)|= 0.$$
\end{lem}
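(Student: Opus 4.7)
The plan is to parallel the preceding (J4a) lemma but exploit the strict monotonicity in (J4b) to upgrade the vanishing from the annular region $B_r(\tilde{x}_0)\setminus B_r(x_0)$ to the full ball $B_r(\tilde{x}_0)$. As in that lemma, the critical-plane condition combined with the interior touching at $\tilde{x}_0$ forces $x_0\sim\tilde{x}_0$ (otherwise a point on the line segment joining $x_0$ and $\tilde{x}_0$ would lie outside $\overline{\Omega}$, contradicting Definition~\ref{D:critical_plane}), so Theorem~\ref{thm:monotone_implies_equality} yields $H^J(x_0)=H^J(\tilde{x}_0)$. The radial symmetry of $J$ and the substitution $y\mapsto R_0(y)$ give $H^J(\tilde{x}_0)=H^J_{R_0(\Omega)}(x_0)$; subtracting $H^J_\Omega(x_0)$ and applying a further reflection substitution to the second integral leads to the identity
\begin{equation*}
    0 = 2\int_{\Omega\setminus R_0(\Omega)}\bigl[J(x_0-y)-J(\tilde{x}_0-y)\bigr]\,dy.
\end{equation*}

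The next step is to localize the integrand to a region where it has a definite sign. The critical-plane property, together with ($\Omega 2$), gives $R_0(\Omega\cap\HP^-_0)\subseteq\Omega\cap\HP^+_0$ modulo a set of measure zero, so $(\Omega\setminus R_0(\Omega))\cap\HP^-_0$ is null and the integration takes place for $y_n>0$ almost everywhere. For such $y$ one has $|y-\tilde{x}_0|<|y-x_0|$, making the integrand non-positive by the monotonicity of $\mu$. Under (J4b), $\mu$ is \emph{strictly} decreasing on $[0,r]$ and positive on $[0,r)$, and this forces $J(\tilde{x}_0-y)>J(x_0-y)$ at every $y\in B_r(\tilde{x}_0)$ with $y_n>0$: if $|y-x_0|<r$ the strict monotonicity of $\mu$ applies, while if $|y-x_0|\ge r$ then $J(x_0-y)=0<J(\tilde{x}_0-y)$. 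Outside $B_r(\tilde{x}_0)$ both $J$-values vanish identically. A non-positive integrand with a strictly negative part cannot integrate to zero unless the strictly negative set has measure zero, giving
\begin{equation*}
    \bigl|(\Omega\setminus R_0(\Omega))\cap B_r(\tilde{x}_0)\bigr|=0.
\end{equation*}
The companion statement $|(R_0(\Omega)\setminus\Omega)\cap B_r(x_0)|=0$ follows immediately, since $R_0$ is a measure-preserving isometry that sends $\Omega\setminus R_0(\Omega)$ onto $R_0(\Omega)\setminus\Omega$ and $B_r(\tilde{x}_0)$ onto $B_r(x_0)$.

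The main technical obstacle is the localization step: formally passing from the strict inclusions $R^+_\lambda(\overline{\Omega})\subseteq\Omega$ valid on the appropriate side of the critical value to the measure-theoretic inclusion $R_0(\Omega\cap\HP^-_0)\subseteq\Omega\cap\HP^+_0$ at $\lambda=0$, with ($\Omega 2$) absorbing the boundary contribution. This is the same containment that is tacitly used in the constant-kernel proof when the annular difference $B_r(\tilde{x}_0)\setminus B_r(x_0)$ is singled out. Once it is available, the strict decay hypothesis in (J4b) does all the remaining work, automatically strengthening the weak annular conclusion of the (J4a) case to the full-ball statement.
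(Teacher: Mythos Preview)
Your proof is correct and follows essentially the same route as the paper: derive the identity $0=2\int_{\Omega\setminus R_0(\Omega)}[J(x_0-y)-J(\tilde{x}_0-y)]\,dy$ from $H^J(x_0)=H^J(\tilde{x}_0)$, observe that the integrand is nonpositive on the effective domain, and use the strict monotonicity in (J4b) to force the set $(\Omega\setminus R_0(\Omega))\cap B_r(\tilde{x}_0)$ to be null. Your treatment is in fact more careful than the paper's on the localization step---the paper simply asserts $|x_0-y|>|x_0-R_0(y)|$ for all $y\in\Omega\setminus R_0(\Omega)$ without justifying why such $y$ must lie in $\HP_0^+$, whereas you correctly identify this as requiring the critical-plane inclusion together with ($\Omega2$); your reflection argument for the companion statement is also cleaner than the paper's ``similarly''.
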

\begin{proof}
    Performing a similar computation as in the proof of the previous Lemma \ref{lem:interior_constant_kernel} we get that $$ 0 = 2 \int_{\Omega \setminus R_0(\Omega)} \big(J(x_0-y)-J(x_0 - R_0(y)) dy.$$
    Then we observe that for every $y \in \Omega \setminus R_0(\Omega)$, $|x_0-y| > |x_0 - R_0(y)|$ 
    and then using the fact that $\mu$ is strictly decreasing on its support set, we have that for all $y \in (\Omega \setminus R_0(\Omega)) \cup B_r(\tilde{x}_0)$:
    $$ J(x_0-y) = \mu(|x_0-y|) < \mu(|x_0-R_0(y)|) = J(x_0 - R_0(y)).$$
    This implies:
    $$|(\Omega \setminus R_0(\Omega)) \cap B_r(\tilde{x}_0)| = 0.$$
    We can prove the other claim (i.e. $|(  R_0(\Omega) \setminus \Omega) \cap B_r(x_0)|= 0$) in a similar way.
\end{proof}

\begin{lem}\label{lem: interior}
    If there is an interior touching point $\Tilde{x}_0 \in R^+_0(\overline{\Omega}) \setminus \Omega$, {where $\Omega$ satisfies assumption \ref{assum_domain}}, then $R_0(\overline{\Omega}) = \overline{\Omega}$. 
   
\end{lem}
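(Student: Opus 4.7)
My plan is to propagate the local agreement $\Omega = R_0(\Omega)$, obtained near the interior touching pair $(x_0,\tilde{x}_0)$ via the two preceding lemmas, to every point of $\Omega$. First, I note that the measure-zero symmetric-difference conclusions of those lemmas upgrade immediately to set-theoretic equality on the relevant open region (the ball $B_r(\tilde{x}_0)\cup B_r(x_0)$ under (J4b), or the two crescents under (J4a)), because both $\Omega$ and $R_0(\Omega)$ are open and hence their symmetric difference is open. This provides a concrete open set on which the two reflections coincide.

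I then introduce the open, $R_0$-symmetric agreement set
\[
    G:=\{y\in\R^n\,:\,\Omega\cap B_\epsilon(y)=R_0(\Omega)\cap B_\epsilon(y)\text{ for some }\epsilon>0\},
\]
which contains interior points of $\Omega$ near both $x_0$ and $\tilde{x}_0$, and I aim to show $\Omega\subseteq G$. Assuming the contrary, I use the path-connectedness of $\Omega$ to choose $\gamma:[0,1]\to\Omega$ with $\gamma(0)\in G\cap\Omega\cap\overline{\HP^-_0}$ (available near $x_0$) and $\gamma(1)\in\Omega\setminus G$, and set $p:=\gamma(t_\ast)$ for $t_\ast:=\sup\{t:\gamma(t)\in G\}$, so $p\in\Omega\cap\partial G$. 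Exploiting the $R_0$-symmetry of $G$ and the presence of $G$-anchors in both half-spaces, I arrange $p\in\overline{\HP^-_0}$. I then rule out each location of $R_0(p)$. If $R_0(p)\notin\overline{\Omega}$, a whole neighborhood of $R_0(p)$ sits in $\overline{\Omega}^c$, so $R_0(\Omega)$ is empty near $p$ while $\Omega$ is full, forbidding $G$-points arbitrarily close to $p$ and contradicting $p\in\partial G$. If $R_0(p)\in\Omega$, both $\Omega$ and $R_0(\Omega)$ fill a common neighborhood of $p$ and agree trivially, so $p\in G$, contradiction. If $R_0(p)\in\partial\Omega$, then since $p\in\overline{\Omega}\cap\HP^-_0$ the critical-plane inclusion $R^+_0(\overline{\Omega})\subseteq\overline{\Omega}$ gives $R_0(p)\in R^+_0(\overline{\Omega})\setminus\Omega$, so $R_0(p)$ is a new interior touching point; applying Lemma~\ref{lem:interior_constant_kernel} (or its (J4b) analog) at $R_0(p)$ produces $\Omega=R_0(\Omega)$ on $B_r(p)\cup B_r(R_0(p))$ (resp.\ on the corresponding crescents), placing $p\in G$ and contradicting $p\in\partial G$.

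From $\Omega\subseteq G$ every $y\in\Omega$ has a neighborhood on which $\Omega=R_0(\Omega)$, giving $\Omega\subseteq R_0(\Omega)$; the $R_0$-symmetry then upgrades this to $\Omega=R_0(\Omega)$, and taking closures concludes. The main obstacle is the half-space reduction in the preceding step: the critical-plane inclusion is one-sided, so a barrier point $p\in\HP^+_0$ would have $R_0(p)\in\HP^-_0$ and there is no direct upper-half interior touching point available to feed back into the preceding lemma. I plan to resolve this either by first establishing $\Omega\cap\overline{\HP^-_0}\subseteq G$ via the argument above and then bootstrapping through the consequent inclusion $R_0(\Omega)\cap\HP^+_0\subseteq\Omega$, or by using the $R_0$-symmetry of $G$ to mirror any upper-half excursion of $\gamma$ into a corresponding lower-half excursion whose first exit lies in $\overline{\HP^-_0}$.
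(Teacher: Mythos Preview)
Your agreement-set approach in $\Omega$ is genuinely different from the paper's. The paper works entirely on $\partial\Omega\cap\HP^+_0$: it shows the set of touching points $T=\partial\Omega\cap R_0(\partial\Omega)\cap\HP^+_0$ is relatively closed (an intersection of closed sets) and relatively open (from each touching point $\tilde x_0$, Lemma~\ref{lem:interior_constant_kernel} forces every boundary point in the open crescent $B_r(\tilde x_0)\setminus\overline{B_r(x_0)}\subseteq\HP^+_0$ to be a touching point as well), so by connectedness $T=\partial\Omega\cap\HP^+_0$ and $R_0(\Omega)=\Omega$. Because the argument never leaves $\HP^+_0$, the half-space asymmetry you flag simply does not arise.

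Your proposal has a genuine gap precisely at that obstacle, and neither fix you sketch closes it. The first fix---establish $\Omega\cap\overline{\HP^-_0}\subseteq G$ first---needs the path argument to run inside $\Omega\cap\overline{\HP^-_0}$, but that set need not be connected. The second fix---mirror $\gamma$---does not force the first exit from $G$ into $\overline{\HP^-_0}$: the case $p\in\Omega\cap\HP^+_0$ with $R_0(p)\in\partial\Omega$ survives, and here no touching-point lemma applies (a touching point must lie in $R^+_0(\overline{\Omega})\setminus\Omega$, yet $p\in\Omega$ while $R_0(p)\in\HP^-_0$). Separately, under (J4a) your handling of the case $R_0(p)\in\partial\Omega$ with $p\in\overline{\HP^-_0}$ is wrong as written: Lemma~\ref{lem:interior_constant_kernel} at the pair $(p,R_0(p))$ gives agreement only on the two crescents, which exclude $p$ whenever $2|p_n|<r$, so ``placing $p\in G$'' does not follow. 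In fact this case is vacuous: by the lemma identifying $R^+_0(\overline\Omega)\setminus\Omega$ with $R^+_0(\partial\Omega)\cap\partial\Omega$, if $R_0(p)$ is a touching point then $p\in\partial\Omega$, contradicting $p\in\Omega$. I would recast your open--closed argument on $\partial\Omega\cap\HP^+_0$, as the paper does, rather than on $\Omega$.
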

\begin{proof}
We will show the result under assumption (J4a); i.e., $J(x)= \chi_{B_r}(x)$. The result can be similarly proved for kernels satisfying (J4b).
     We start by arguing
    $$\partial \Omega \cap B_r(\Tilde{x}_0) \setminus \overline{B_r({x}_0)} = R_0(\partial \Omega) \cap B_r(\Tilde{x}_0) \setminus \overline{B_r({x}_0)}$$
    and
    $$\partial \Omega \cap B_r(x_0) \setminus \overline{B_r(\Tilde{x}_0)} = R_0(\partial \Omega) \cap B_r(x_0) \setminus \overline{B_r(\Tilde{x}_0)}.$$
    This proof is quite similar to the computations given in \cite{Bucur1}. Using Lemma \eqref{lem:interior_constant_kernel},  we have 
    $$|(\Omega \setminus R_0(\overline{\Omega}))\cap (B_r(\tx_0) \setminus \overline{B_r(x_0)}| = |(R_0(\Omega) \setminus \overline{\Omega})\cap (B_r(x_0) \setminus \overline{B_r(\tx_0)}| =0.$$
    This implies $(\Omega \setminus R_0(\overline{\Omega}))\cap (B_r(\tx_0) \setminus \overline{B_r(x_0)} = \emptyset$, since $\Omega$ is an open set.
    Next, we choose a point $\Tilde{y} \in (\partial \Omega \setminus R_0(\partial \Omega)) \cap (B_r(\Tilde{x}_0) \setminus \overline{B_r({x}_0)}) \cap \HP^+_0$.   Since $B_r(\Tilde{x}_0) \setminus \overline{B_r({x}_0)}$ is an open set, we have $\epsilon>0$ such that $B_{2 \epsilon}(\Tilde{y}) \subseteq (B_r(\Tilde{x}_0) \setminus \overline{B_r({x}_0)}) \cap \HP^+_0$. Now we assume that there exists a sequence $\{\Tilde{y}^k\} \in \Omega \cap B_\epsilon(\Tilde{y})$ such that $\Tilde{y}^k \to \Tilde{y}$. Since $(\Omega \setminus R_0(\overline{\Omega}))\cap (B_r(\tx_0) \setminus \overline{B_r(x_0)} = \emptyset$, we have $\{\Tilde{y}^k\} \subseteq R_0(\overline{\Omega}) \cap B_\epsilon(\Tilde{y})$. Then we have $\Tilde{y} \in R_0(\overline{\Omega}) \cap \HP^+_0$. However, that would imply $\Tilde{y} \in R^+_0(\partial \Omega) \cap \partial \Omega$, a contradiction. Hence, we have $$\partial \Omega \cap B_r(\tilde{x}_0) \setminus \overline{B_r({x}_0)} = R_0(\partial \Omega) \cap B_r(\tilde{x}_0) \setminus \overline{B_r({x}_0)}.$$ This implies that every boundary point in $B_r(\tilde{x}_0) \setminus \overline{{B_r}(x_0)}$ is a touching point. We also observe that the set of touching points is open with respect to $\partial \Omega \cap \HP^+_0$.
    Similarly, we can prove the other claim. 
    Next we observe that the set of touching points is $\partial \Omega \cap R_0(\partial \Omega) \cap HP^+_0$ and hence it is closed in $\partial \Omega \cap HP^+_0$. Then $R_0(\Omega) = \Omega$.
    
\end{proof}

\begin{figure}
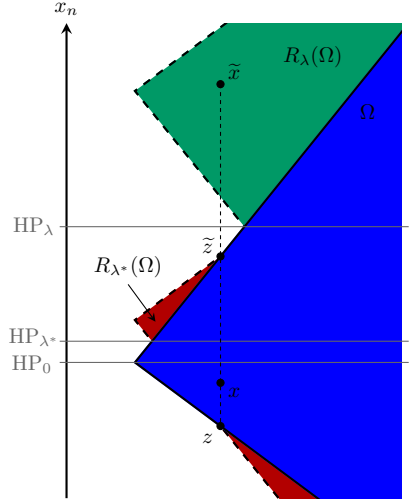

\centering
\tikz[scale=.9,every node/.style={scale=.8}]{

\begin{scope}
\clip (0,0)rectangle(4.9,7);
\draw[-,thick,dashed,blue!40!green] (5,1)--(1,6)--(5,9)--cycle;
\draw[-,thick,dashed,fill=blue!40!green,opacity=.2] (5,1)--(1,6)--(5,9)--cycle;
\draw[-,thick,dashed,red!70!black] (5,-2.375)--(1,2.625)--(5,5.625)--cycle;
\draw[-,thick,dashed,fill=red!70!black,opacity=.2] (5,-2.375)--(1,2.625)--(5,5.625)--cycle;
\draw[-,thick,blue!70] (5,-1)--(1,2)--(5,7)--cycle;
\draw[-,thick,fill=blue,opacity=.3] (5,-1)--(1,2)--(5,7)--cycle;
\end{scope}

\draw[-stealth,thick] (0,0)--(0,7) node[above]{$x_n$};
\draw[-,thin,black!60] (5,2)--(-.1,2) node[left,yshift=-1]{$\HP_0$};
\draw[-,thin,black!60] (5,4)--(-.1,4) node[left,xshift=1.3]{$\HP_{\lambda}$};
\draw[-,thin,black!60] (5,2.3125)--(-.1,2.3125) node[left,xshift=3.5,yshift=1.8]{$\HP_{\lambda^*}$};
\draw[fill=black] (2.25,1.7)circle(.052) node[below right,yshift=1.3]{$x$};
\draw[fill=black] (2.25,6.1)circle(.052) node[above right,yshift=-1.3]{$\widetilde{x}$};
\draw[fill=black] (2.25,1.0625)circle(.052) node[below left]{$z$};
\draw[fill=black] (2.25,3.5625)circle(.052) node[above left,yshift=-2]{$\widetilde{z}$};
\draw[-,thin,dashed,dash pattern=on 1.5pt off 1.5pt] (2.25,6.1)--(2.25,1.0625);
\node at(4.4,5.7){$\Omega$};
\node at (3.6,6.5){$R_\lambda(\Omega)$};
\draw[stealth-,thin] (1.3,2.6)--(.9,3.15) node[above]{$R_{\lambda^*}(\Omega)$};
}
\caption{Identification of $z$ and $\widehat{z}$ in Case 1 for the proof of Lemma~\ref{lem:3lambda}}
\label{fig:non_trans_1}
\end{figure}


\begin{lem} \label{lem:3lambda}
    Suppose $\HP_0$ is the critical plane and $R^+_0(\overline{\Omega}) \setminus \Omega = \emptyset$. Then for each $\lambda>0$ and $\Tilde{x} \in \ROmCl \setminus \Omega,$ there exists $0<\lambda^*<2 \lambda$ and $z \in \partial \Omega \cap \HP^-_{\lambda^*}$ such that 
    \begin{itemize}
        \item $\Tilde{z}= R_{\lambda^*}(z) \in \partial \Omega$
        \item $- 3\Tilde{x}_n < z_n < \Tilde{z}_n < 3\Tilde{x}_n$  ($n$-th coordinate)
    \end{itemize}
\end{lem}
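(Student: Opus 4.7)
The plan is to construct $(z,\tilde z)$ on the vertical line $L=\{\widehat{\tx}\}\times\R$ by pairing the bottom endpoint of the component of $\pinv_{\overline{\Omega}}(\widehat{\tx})$ containing $x:=R_\lambda(\tx)=(\widehat{\tx},2\lambda-\tx_n)\in\overline{\Omega}$ with the top endpoint of a second component prescribed by the critical-plane hypothesis.

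By ($\Omega3$), write $\pinv_{\overline{\Omega}}(\widehat{\tx})=\bigcup_i[\ux^i,\overline{x}^i]$, let $[\ux^j,\overline{x}^j]$ be the component containing $x_n$, and set $z:=(\widehat{\tx},\ux^j)$. When $\ux^j<0$, the hypothesis $R^+_0(\overline{\Omega})\setminus\Omega=\emptyset$ applied to $z\in\overline{\Omega}\cap\HP^-_0$ yields $R_0(z)=(\widehat{\tx},-\ux^j)\in\Omega$, so $-\ux^j$ lies in the interior $(\ux^m,\overline{x}^m)$ of some component; take $m=j$ otherwise. Set $\tilde z:=(\widehat{\tx},\overline{x}^m)$ and $\lambda^*:=(\ux^j+\overline{x}^m)/2$, so that $z,\tilde z\in\partial\Omega$ and $\tilde z=R_{\lambda^*}(z)$ automatically.

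All four required inequalities then follow from this setup. Positivity $\lambda^*>0$ is immediate from $\overline{x}^m>-\ux^j$ (giving $\ux^j+\overline{x}^m>0$), with strict inequality in the non-degenerate case. For the upper bound $\overline{x}^m\le\tx_n$, first establish $\ux^m<\tx_n$: in the main case $m=j$, $\ux^m=\ux^j\le x_n<\tx_n$ directly; in the alternative case (which occurs only when $\overline{x}^j<0$), the critical plane applied to the full reflected component gives $[-\overline{x}^j,-\ux^j]\subseteq(\ux^m,\overline{x}^m)$, so $\ux^m<-\overline{x}^j\le\tx_n-2\lambda<\tx_n$. The non-interior argument, $\tx\notin\Omega$ forbidding $\tx_n\in(\ux^m,\overline{x}^m)$, then forces $\overline{x}^m\le\tx_n$. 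Averaging $\ux^j\le x_n$ with $\overline{x}^m\le\tx_n$ yields $\lambda^*\le\lambda<2\lambda$, and the remaining numerical bounds $\tilde z_n=\overline{x}^m\le\tx_n<3\tx_n$ and $z_n=\ux^j>-\overline{x}^m\ge-\tx_n>-3\tx_n$ are immediate.

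The main obstacle is the bound $\overline{x}^m\le\tx_n$ in the alternative case, which requires the careful two-step argument combining the critical plane (to force $\ux^m<\tx_n$ via the full reflected component) with the non-interior argument; the mildly generous buffer of $3\tx_n$ in the statement comfortably absorbs this alternative component choice. The degenerate subcase $\ux^j=\overline{x}^j=x_n$ (an isolated point of $\overline{\Omega}$ on $L$) is handled by jumping to the next non-degenerate component of $\pinv_{\overline{\Omega}}(\widehat{\tx})$ below $\tx_n$, or in its absence by a horizontal perturbation of $\widehat{\tx}$, using the openness of all numerical bounds to transfer them back by continuity.
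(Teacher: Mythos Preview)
There is a genuine gap in your ``non-interior argument.'' Your interval decomposition $\pinv_{\overline{\Omega}}(\widehat{\tx})=\bigcup_i[\ux^i,\overline{x}^i]$ is taken in the \emph{closure} $\overline{\Omega}$, so $\tx_n\in(\ux^m,\overline{x}^m)$ only forces $\tx\in\overline{\Omega}$, not $\tx\in\Omega$. The hypothesis $\tx\notin\Omega$ therefore does \emph{not} exclude $\tx_n\in(\ux^m,\overline{x}^m)$: when $\tx$ happens to be a lateral boundary point (type (P3) in the paper's classification), one has $\tx\in\partial\Omega$ with $\tx_n$ strictly inside a component of $\pinv_{\overline{\Omega}}(\widehat{\tx})$. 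In that situation $\overline{x}^m>\tx_n$, so your claimed bound $\overline{x}^m\le\tx_n$ fails, and with it the estimates $\lambda^*\le\lambda$ and $\tilde z_n<3\tx_n$. The non-transversality hypothesis $R_0^+(\overline{\Omega})\subseteq\Omega$ does force $(\widehat{\tx},-\tx_n)\notin\overline{\Omega}$ in this scenario, but that gives no upper control on $\overline{x}^m$.

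The paper avoids this trap by first splitting into the cases $x=R_\lambda(\tx)\in\Omega$ (open) versus $\tx\notin\overline{\Omega}$, and in the former case taking $z_n,\tilde z_n$ to be the endpoints of the maximal interval through $x_n$ lying in $\Omega$ (not in $\overline{\Omega}$); then $\tx\notin\Omega$ genuinely forces $\tilde z_n\le\tx_n$. A secondary but important point: in your alternative case $m\ne j$ the segment from $z=(\widehat{\tx},\ux^j)$ to $\tilde z=(\widehat{\tx},\overline{x}^m)$ traverses the gap $(\overline{x}^j,\ux^m)\subset\overline{\Omega}^c$, so $z\not\sim\tilde z$. While the lemma as literally stated does not demand this, the paper explicitly records (immediately after the proof) that its construction yields $z\sim\tilde z$, and this is precisely what is used in the next lemma to invoke $H^J(z)=H^J(\tilde z)$ via Theorem~\ref{thm:monotone_implies_equality}. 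Your construction would not support that step.
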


\begin{proof}

We see that 
$$(\ROmCl \setminus \Omega) = (\RPmL \setminus \overline{\Omega}) \cup (R^+_\lambda(\Omega) \setminus {\Omega}) .$$
Then we consider the following two cases:
\begin{itemize}
    
    \item[] \underline{Case $1$}: $\tx \in R^+_\lambda(\Omega) \setminus  \Omega$\\
    Then $x=(\xh, x_n) \in \Omega \setminus R_\lambda(\Omega) \cap \HP^{-}_0$. Since $x \in \Omega$, then we can define,
\begin{align*}
    z_n =& \inf\left\{ \xi < x_n :\left\{\xh\right\} \times [\xi, x_n]  \in \Omega \right\}
\intertext{and}
    \Tilde{z}_n =& \sup \left\{ \xi > x_n : \left\{\xh\right\} \times [x_n, \xi] \in \Omega \right]\}.
\end{align*}
    Then $z= (\xh, z_n) \in \partial \Omega$ and $\Tilde{z} = (\xh, \Tilde{z}_n) \in \partial \Omega$ and $z_n < x_n < \Tilde{z}_n \leq \Tilde{x}_n$,  see Figure \ref{fig:non_trans_1}. Next we define $\lambda^* = \frac{1}{2}(z_n + \Tilde{z}_n) \leq \lambda$. If $z_n \geq 0$, then $\lambda^* >0$. Otherwise, since $\HP_0$ is the critical hyperplane and there is no interior touching point, we have $\Tilde{x}_n > [R(z)]_n$ and hence $\lambda^* >0$.

\item[] \underline{Case $2$}: $\tx \in (\RPmL \setminus \overline{\Omega})$\\
    Then we have $x=(\xh, x_n) \in \partial \Omega \cap \HP^-_\lambda \setminus R_\lambda(\overline{\Omega})$ such that $R_\lambda(x) = \tx$. We have two subcases:
\begin{itemize}
\item[] \underline{Subcase 2a}: $x_n < 0$\\
Then $R_0(x) \in \overline{\Omega}$. As in Case 1, we define,
\[
    \Tilde{z}_n = \sup \left\{ \xi > x_n : \left\{\xh\right\} \times [\xi, z^*_n]  \in \overline{\Omega} \right\}.
\]
    Setting $\Tilde{z}=(\xh, \Tilde{z}_n)$ and $z=x$, we can define $\lambda^* = \frac{(z_n + \Tilde{z}_n)}{2}$. Then we have,
    $-\Tilde{x}_n < x_n =z_n < -x_n <  \Tilde{z}_n \leq \Tilde{x}_n$, $0<\lambda^* \leq \lambda$ and $z \sim \Tilde{z}$.
\item[] \underline{Subcase 2b}: $0 \leq x_n < \lambda$\\
    Since $\tx \in \R^n \setminus \overline{\Omega}$, there exists an $0<\epsilon<\lambda$ such that $B_\epsilon(\tx) \subset \R^n \setminus \overline{\Omega}$. Thus, we may select $z^* = (\widehat{z}^*, z^*_n) \in B_\epsilon(x) \cap \Omega$.     
    Let us define,
\[
    \Tilde{z}_n = \sup \left\{ \xi > z^*_n : \left\{\xh\right\} \times [\xi, z^*_n]  \in\Omega \right\},
\]
    which gives us $\Tilde{z}=(\widehat{z}^*, \Tilde{z}_n) \in \partial \Omega$. 
    Then we have $x_n-\epsilon < z^*_n < \Tilde{z}_n\le \Tilde{x}_n \le 2 \lambda$.
    Similarly, we define
\[
    z_n = \inf \left\{ \xi < z^*_n : \left\{\xh\right\} \times [\xi, z^*_n]  \in \Omega \right\}
\]
    and $\lambda^* = (z_n + \Tilde{z}_n)/2 < \Tilde{z}_n < 3 \lambda$. As $\HP_0$ is the critical hyperplane, we must have $\lambda^*>0$. Thus, $\Tilde{z}= R_{\lambda^*}(z)$ and $-3 \lambda < 2 \lambda^* - 3 \lambda < 2 \lambda^* - \Tilde{z}_n = z_n < \Tilde{z}_n < 3 \lambda$.
\end{itemize}

\end{itemize}

\end{proof}

From the construction of these points, again we notice that $z \sim \Tilde{z}$.
\begin{figure}[h]
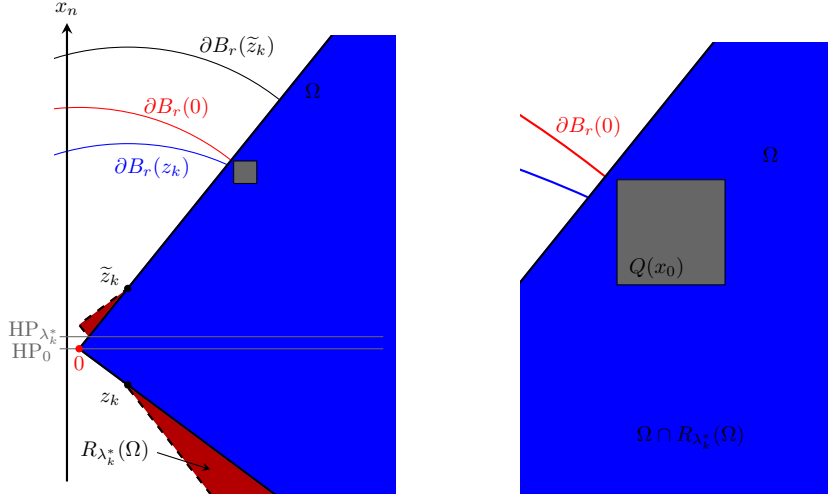

\centering
\parbox[b]{.4\textwidth}{}
\tikz[scale=3.2,every node/.style={scale=.8}]{

\begin{scope}
\clip (0.9,1.4)rectangle(2.3,3.3);
\draw[-,red] (1,2)circle(1);
\draw[-,black] (1.2,2.25)circle(1);
\draw[-,blue] (1.2,1.85)circle(1);
\node[red] at(1.4,3){$\partial B_r(0)$};
\node[black] at(1.65,3.25){$\partial B_r(\widetilde{z}_k)$};
\node[blue] at(1.3,2.75){$\partial B_r(z_k)$};
\draw[-,thick,dashed,red!70!black] (5,-3.1)--(1,2.1)--(5,5.1)--cycle;
\draw[-,thick,dashed,fill=red!70!black,opacity=.2] (5,-3.1)--(1,2.1)--(5,5.1)--cycle;
\draw[-,thick,blue!70] (5,-1)--(1,2)--(5,7)--cycle;
\draw[-,thick,fill=blue,opacity=.3] (5,-1)--(1,2)--(5,7)--cycle;
\end{scope}

\draw[-stealth,thick] (0.95,1.45)--(0.95,3.35) node[above]{$x_n$};
\draw[-,thin,black!60] (2.25,2)--(.92,2) node[left,yshift=-1.7]{$\HP_0$};
\draw[-,thin,black!60] (2.25,2.05)--(.92,2.05) node[left,xshift=3.5,yshift=1.7]{$\HP_{\lambda^*_k}$};
\draw[red,fill=red] (1,2)circle(.013) node[below]{$0$};
\draw[fill=black] (1.68,2.7334)circle(.012);
\draw[fill=black!60,opacity=.3](1.635,2.685)rectangle(1.73,2.7782);
\draw[fill=black] (1.2,1.85)circle(.013) node[below left]{$z_k$};
\draw[fill=black] (1.2,2.25)circle(.013) node[above left,yshift=-2]{$\widetilde{z}_k$};
\node at (1.96,3.07){$\Omega$};
\draw[stealth-] (1.54,1.5)--(1.32,1.57) node[left]{$R_{\lambda_k^*}(\Omega)$};
}
\hspace{40pt}
\parbox[b]{.35\textwidth}{
\tikz[scale=14.95,every node/.style={scale=.8}]{

\begin{scope}
\clip (1.55,2.5)rectangle(1.82,2.9);
\draw[-,red,thick] (1,2)circle(1);
\draw[-,black,thick] (1.2,2.25)circle(1);
\draw[-,blue,thick] (1.2,1.85)circle(1);
\node[red] at(1.61,2.825){$\partial B_r(0)$};
\draw[-,thick,dashed,red!70!black] (5,-3.1)--(1,2.1)--(5,5.1)--cycle;
\draw[-,thick,dashed,fill=red!70!black,opacity=.2] (5,-3.1)--(1,2.1)--(5,5.1)--cycle;
\draw[-,thick,blue!70] (5,-1)--(1,2)--(5,7)--cycle;
\draw[-,thick,fill=blue,opacity=.3] (5,-1)--(1,2)--(5,7)--cycle;

\draw[-stealth,thick] (0.95,1.45)--(0.95,3.35) node[above]{$x_n$};
\draw[-,thin,black!60] (2.05,2)--(.92,2) node[below left,yshift=1.3]{$\pi_0$};
\draw[-,thin,black!60] (2.05,2.05)--(.92,2.05) node[left,xshift=3.5,yshift=.5]{$\pi_{\lambda^*_k}$};
\draw[red,fill=red] (1,2)circle(.013) node[below]{$0$};
\draw[fill=black] (1.68,2.7338)circle(.003) node[above right]{$x_0$};
\draw[fill=black!60,opacity=.3](1.635,2.685)rectangle(1.73,2.7782);
\node at (1.67,2.7){$Q(x_0)$};
\draw[fill=black] (1.2,1.85)circle(.013) node[below left]{$z_k$};
\draw[fill=black] (1.2,2.25)circle(.013) node[above left,yshift=-2]{$\widetilde{z}_k$};
\node at (1.77,2.8){$\large\Omega$};
\node at (1.7,2.55) {$\large\Omega\cap R_{\lambda_k^*}(\Omega)$};
\end{scope}

}
}
\caption{(Left) $k$-th step in proof of Lemma~\ref{lem:non_trans}. (Right) The fixed cylinder $Q(x_0)\subseteq \Omega\cap\HP_0^+\setminus R_0(\overline{\Omega})$, with $x_0\in\partial B_r(0)$.}
\label{fig:non_trans_2}
\end{figure}

\begin{lem}\label{lem:non_trans}
    Suppose $\HP_0$ is a nontransveral plane and assumption (J4a) holds. {In addition to that, $\Omega$ satisfies assumption \ref{assum_domain}}. Then
    $$\Omega \cap \partial B_r(0) \setminus R_0(\overline{\Omega}) = \emptyset, \quad R_0(\Omega) \cap \partial B_r(0) \setminus \overline{\Omega} =\emptyset. $$
\end{lem}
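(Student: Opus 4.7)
The plan is to combine the critical-plane condition with Theorem~\ref{thm:monotone_implies_equality} to extract an integral identity on $\partial B_r(0)\cap \HP_0^+$, then contradict it using the hypothesized $x_0$. By the critical-plane property, fix a decreasing sequence $\lambda_k\to 0^+$ with $R^+_{\lambda_k}(\overline{\Omega})\setminus \Omega\neq \emptyset$. Lemma~\ref{lem:3lambda} supplies $\lambda_k^\ast\in(0,2\lambda_k)$ and $z_k,\tilde z_k\in \partial \Omega$ with $z_k\sim \tilde z_k$ and $\tilde z_k=R_{\lambda_k^\ast}(z_k)$. The bounds in Lemma~\ref{lem:3lambda} and $\tilde x_{k,n}\to 0$ force $z_{k,n},\tilde z_{k,n}\to 0$; passing to a subsequence and translating in the horizontal coordinates, assume $z_k,\tilde z_k\to 0\in \partial \Omega\cap \HP_0$, so that the origin is the center of the ball $B_r(0)$ in the statement. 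Since $z_k\sim \tilde z_k$, Theorem~\ref{thm:monotone_implies_equality} under (J4a) gives $|B_r(z_k)\cap \Omega|=|B_r(\tilde z_k)\cap \Omega|$; rewriting the lower crescent $B_r(z_k)\setminus B_r(\tilde z_k)$ as $R_{\lambda_k^\ast}(B_r(\tilde z_k)\setminus B_r(z_k))$ and using that reflection preserves Lebesgue measure yields the key identity
\begin{equation}\label{eq:crescentIdentNT}
 |(B_r(\tilde z_k)\setminus B_r(z_k))\cap \Omega|=|(B_r(\tilde z_k)\setminus B_r(z_k))\cap R_{\lambda_k^\ast}(\Omega)|.
\end{equation}

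\textbf{Limit on the upper hemisphere.} Set $d_k:=\tilde z_{k,n}-z_{k,n}>0$. A direct computation (the crescent has thickness $d_k(\nu\cdot e_n)+O(d_k^2)$ at a point of $\partial B_r(0)$ with outer normal $\nu$, positive only on the upper hemisphere) yields the asymptotic
\[
 |(B_r(\tilde z_k)\setminus B_r(z_k))\cap A|=d_k\int_{\partial B_r(0)\cap \HP_0^+\cap A}\frac{p_n}{r}\,d\mathcal{H}^{n-1}(p)+o(d_k)
\]
as $k\to\infty$, for any measurable $A$ with $\mathcal{H}^{n-1}(\partial A\cap \partial B_r(0)\cap \HP_0^+)=0$. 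Applied to $A=\Omega$ and to $A=R_{\lambda_k^\ast}(\Omega)$---the symmetric difference between $R_{\lambda_k^\ast}(\Omega)=R_0(\Omega)+2\lambda_k^\ast e_n$ and $R_0(\Omega)$ is a shell of thickness $O(\lambda_k^\ast)$, contributing $O(\lambda_k^\ast d_k)=o(d_k)$---dividing~\eqref{eq:crescentIdentNT} by $d_k$ and passing to the limit $k\to\infty$ yields
\begin{equation}\label{eq:hemIdentNT}
 \int_{\partial B_r(0)\cap \HP_0^+}\bigl[\chi_\Omega(p)-\chi_{R_0(\Omega)}(p)\bigr]\frac{p_n}{r}\,d\mathcal{H}^{n-1}(p)=0.
\end{equation}

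\textbf{Contradiction and second claim.} The non-transversal assumption $R^+_0(\overline{\Omega})\subseteq \Omega$ implies $\chi_{R_0(\Omega)}\le \chi_\Omega$ on $\HP_0^+$, so the integrand in~\eqref{eq:hemIdentNT} is non-negative; together with $p_n/r>0$ on $\partial B_r(0)\cap \HP_0^+$, this forces $\chi_\Omega=\chi_{R_0(\Omega)}$ $\mathcal{H}^{n-1}$-a.e.\ on the upper hemisphere. Now suppose for contradiction $x_0\in \Omega\cap \partial B_r(0)\setminus R_0(\overline{\Omega})$. If $x_{0,n}=0$, then $R_0(x_0)=x_0\in \Omega\subseteq \overline{\Omega}$, contradicting $x_0\notin R_0(\overline{\Omega})$. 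If $x_{0,n}<0$, then $R_0(x_0)$ lies on the upper hemisphere and in $R_0(\Omega)$, and the a.e.\ equality places $R_0(x_0)$ in the closure of an a.e.-interior set for $\chi_\Omega$, forcing $R_0(x_0)\in \overline{\Omega}$ and again contradicting $x_0\notin R_0(\overline{\Omega})$. If $x_{0,n}>0$, then openness of $\Omega\cap R_0(\overline{\Omega})^c$ yields an open set $U\subseteq \partial B_r(0)\cap \HP_0^+$ about $x_0$ of positive $\mathcal{H}^{n-1}$-measure on which $\chi_\Omega-\chi_{R_0(\Omega)}\equiv 1$, directly contradicting the a.e.\ equality. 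This proves the first claim; the second statement $R_0(\Omega)\cap \partial B_r(0)\setminus \overline{\Omega}=\emptyset$ follows from the first by the involution $y_0\leftrightarrow R_0(y_0)$, which bijects its set with the (now empty) set of the first claim.

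\textbf{Main obstacle.} The key technical point is the passage to the limit in~\eqref{eq:hemIdentNT}: both the $k$-dependence of $R_{\lambda_k^\ast}(\Omega)$ and the possible presence of $\partial \Omega\cap \partial B_r(0)$ must be handled. The former is controlled by the explicit shift $R_{\lambda_k^\ast}(\Omega)=R_0(\Omega)+2\lambda_k^\ast e_n$ (shell thickness $O(\lambda_k^\ast)$), so the induced error is $O(\lambda_k^\ast d_k)=o(d_k)$; the latter is essentially irrelevant because the a.e.\ coincidence of $\chi_\Omega$ and $\chi_{R_0(\Omega)}$ on open subsets of the hemisphere, which is all the contradiction uses, is insensitive to $\mathcal{H}^{n-1}$-null sets.
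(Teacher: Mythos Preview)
Your overall strategy---derive a crescent identity from $H^J(z_k)=H^J(\widetilde z_k)$, then exploit the open neighbourhood of the putative $x_0$ to get a contradiction---matches the paper's. The gap is in how you pass from the $k$-dependent identity~\eqref{eq:crescentIdentNT} to anything involving $R_0(\Omega)$ or the hemisphere $\partial B_r(0)$.

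Your control on the replacement $R_{\lambda_k^\ast}(\Omega)\rightsquigarrow R_0(\Omega)$ does not work. Writing $R_{\lambda_k^\ast}(\Omega)=R_0(\Omega)+2\lambda_k^\ast e_n$, the symmetric difference is indeed contained in the $2\lambda_k^\ast$-neighbourhood of $\partial R_0(\Omega)$, but intersecting a ``shell of thickness $\lambda_k^\ast$'' with a crescent of thickness $d_k$ does \emph{not} give $O(\lambda_k^\ast d_k)$ in general: that product bound would require transversality of $\partial R_0(\Omega)$ and $\partial B_r(0)$, which you have no way to verify under the weak hypotheses on $\Omega$ (only $(\Omega2)$, $\mathcal H^n(\partial\Omega)=0$). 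For the same reason, the co-area asymptotic $|C_k\cap A|=d_k\int_{\partial B_r(0)\cap\HP_0^+\cap A}(p_n/r)\,d\mathcal H^{n-1}+o(d_k)$ is unavailable: it needs $\mathcal H^{n-1}(\partial\Omega\cap\partial B_r(0)\cap\HP_0^+)=0$, which is neither assumed nor proved. Your ``Main obstacle'' paragraph acknowledges these issues but does not resolve them; the hemisphere identity~\eqref{eq:hemIdentNT} is never actually established.

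The paper avoids taking any limit in the identity. The crucial missing ingredient is a \emph{confinement sequence} $\mu_k\to0$ (with $\mu_k\ge\lambda_k$) such that $R^+_{\lambda_k^\ast}(\overline{\Omega})\setminus\Omega\subseteq\HP^-_{\mu_k}$; this exists precisely because the non-transversal hypothesis rules out interior touching points (otherwise the argument of Lemma~3.2 would produce one). Since the upper crescent $C_k=B_r(\widetilde z_k)\setminus B_r(z_k)$ lies in $\HP^+_{\lambda_k^\ast}$, one gets $C_k\cap(R_{\lambda_k^\ast}(\Omega)\setminus\Omega)\subseteq C_k\cap\{\lambda_k^\ast<y_n\le\mu_k\}$, whose slices are annuli of area $O(r^{n-2}d_k)$, so this set has measure $O(d_k\mu_k)=o(d_k)$. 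On the other side, the fixed open cylinder $Q(x_0)\subseteq\Omega\setminus R_0(\overline{\Omega})$ around $x_0$ satisfies $Q(x_0)\subseteq\Omega\setminus R_{\lambda_k^\ast}(\overline{\Omega})$ for $k$ large and $|C_k\cap Q(x_0)|\ge c\,d_k$ with $c>0$ fixed. Comparing the two sides of your own identity $|C_k\cap(\Omega\setminus R_{\lambda_k^\ast}(\Omega))|=|C_k\cap(R_{\lambda_k^\ast}(\Omega)\setminus\Omega)|$ at finite $k$ then gives $c\,d_k\le C\,d_k\mu_k$, the desired contradiction---no surface-integral limit, no $\mathcal H^{n-1}$ hypothesis on $\partial\Omega\cap\partial B_r(0)$, and no comparison with $R_0(\Omega)$ needed.
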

\begin{proof}
    We will show that $\Omega \cap \HP^+_0 \cap \partial B_r(0) \setminus R_0(\overline{\Omega}) = \emptyset$. Suppose there exists $x_0 \in \Omega \cap \HP^+_0 \cap \partial B_r(0) \setminus R_0(\overline{\Omega})$. Since $\Omega \cap \HP^+_0 \setminus R_0(\overline{\Omega})$ is an open set, there exists an open cylinder $Q(x_0) = B'_\epsilon(\xh_0) \times (x_{0,n}-l_0, x_{0,n}+l_0) \subseteq \Omega \cap \HP^+_0 \setminus R_0(\overline{\Omega})$ for some $\epsilon, l_0 >0$, see the Figure \ref{fig:non_trans_2}.

    Since $\HP_0$ is the critical hyperplane with non-transversal intersection, we know that there exists a sequence $\lambda_k \to 0$ and a sequence $\Tilde{x}_k$ such that $\Tilde{x}_k \in \ROmClk \setminus \Omega$. By Lemma \ref{lem:3lambda}, we have $\lambda^*_k, z_k, \Tilde{z}_k$ such that 
    $0< \lambda^*_k < 3 \lambda_k$ and $- \Tilde{x}_{k,n} < z_{k,n} < \lambda^*_k < \Tilde{z}_{k,n} < \Tilde{x}_{k,n}$. Then for each $k$, we have $\mu_k \geq \lambda_k$ such that $\mu_k \to 0$ and $R^+_{\lambda^*_k}(\overline{\Omega}) \setminus (\Omega \cup \HP^-_{\mu_k}) = \emptyset$. If this is not true then for every sequence of $\{ \mu_k \}$ such that $\mu_k \geq \lambda_k$ and $\mu_k \to 0$, there exists a subsequence $\mu_{k_l}$ such that $R^+_{\lambda^*_{k_l}}(\overline{\Omega}) \setminus (\Omega \cup \HP^-_{\mu_{k_l}}) \neq \emptyset$. Next we choose $\mu^*$ such that $\mu^* \geq \mu_{k_l}$ for all $l$ except for finitely many. Then we see that 
    $R^+_{\lambda^*_{k_l}}(\overline{\Omega}) \setminus (\Omega \cup \HP^-_{\mu^*}) \neq \emptyset$ for all those $l$. That implies there exists a touching point. Hence we have a sequence $\{\mu_k\}$ such that $\mu_k \geq \lambda_k$ and $\mu_k \to 0$ and $R^+_{\lambda^*_k}(\overline{\Omega}) \setminus (\Omega \cup \HP^-_{\mu_k}) = \emptyset$. Then for all $k$, we have $\overline{\Omega} \setminus R_{\lambda^*_k}(\Omega) \subseteq \HP^+_{-\mu_k}$. Next we denote,
    $$
    \begin{cases}
        P_k = \HP^-_{\lambda^*_k} \cap \HP^+_{-\mu_k}, \\
        C_k = \HP^+_{\lambda^*_k} \cap B_r(\Tilde{z}_k) \setminus B_r(z_k), \\
        S_k = \cup_{\xh \in B_\epsilon(\xh_0)} L_k (\xh),
    \end{cases}
    $$
    where $L(\xh) = \left\{\xh\right\} \times (x_{0,n}-l_0, x_{0,n}+l_0)$ and $L_k(\xh) = L(\xh) \cap C_k$.

     Without loss of generality we assume that $0<\lambda^*_k < \mu_k< x_{0,n}- l_0$. Since $R^+_{\lambda^*_k} (\overline{\Omega}) \setminus (\Omega \cup \HP^-_{\mu_k})=\emptyset$, then we have $z_{k,n}, \Tilde{z}_{k,n} < \mu_k$. That further implies $S_k \subseteq (B_r(\Tilde{z}_k) \setminus B_r(z_k)) \cap \HP^+_{\mu_k}$. We see that for large $k$, $\mathcal{H}^1(L_k(\xh)) =( \Tilde{z}_{k,n} - z_{k,n})$. Then by Cavalieri's Principle, we have 
    $$ |S_k| = ( \Tilde{z}_{k,n} - z_{k,n}) \mathcal{H}^{n-1}(B'_r) =  \sigma r^{n-1} l_k,$$ 
    where $\sigma = \mathcal{H}^{n-1}(B'_1)$ and $l_k = ( \Tilde{z}_{k,n} - z_{k,n})$.

    Next, comparing the curvature at the boundary points $z_k, \Tilde{z}_k$, {i.e, $H_J(z_k) = H_J(\Tilde{z}_k)$ due to Theorem \ref{thm:monotone_implies_equality}} we have 
    \begin{align*}
        0 &= H^J(\Tilde{z}_k) - H^J(z_k)= H^J(R_{\lambda^*_k}(z_k)) - H^J(z_k)\\
        &=2 \int_{\Omega \setminus R_{\lambda^*_k}(\Omega)}J(|y-z_k|) dy - 2 \int_{ R_{\lambda^*_k}(\Omega)  \setminus \Omega}J(|y-z_k|) dy.
    \end{align*}
    This implies 
    $$ |(\Omega \setminus R_{\lambda^*_k}(\Omega)) \cap (B_r(z_k) \setminus B_r(\Tilde{z}_k)) | =  |(\Omega \setminus R_{\lambda^*_k}(\Omega)) \cap (B_r(\Tilde{z}_k) \setminus B_r(z_k)) |.$$
    We see that $S_k \subseteq \Omega \setminus R_{\lambda^*_k}(\Omega)) \cap (B_r(\Tilde{z}_k) \setminus B_r(z_k))$ and $(\Omega \setminus R_{\lambda^*_k}(\Omega)) \cap (B_r(z_k) \setminus B_r(\Tilde{z}_k)) \subseteq (B_r(z_k) \setminus B_r(\Tilde{z}_k)) \cap P_k$. Thus, we have
    $$ \sigma r^{n-1} l_k \leq  |(B_r(z_k) \setminus B_r(\Tilde{z}_k)) \cap P_k|.$$ 
    For each $-\mu_k < d < \lambda^*_k$, define 
    \[
        \xi(d): = \mathcal{H}^{n-1} \big( \HP_d \cap B_r(z_k) \setminus B_r(\Tilde{z}_k) \big)
        = \big|\H^{n-1}\big(B'_{r_d}(\widehat{z}_k)\big)- \H^{n-1}\big(B'_{\Tilde{r}_d}(\widehat{\Tilde{z}}_k )\big)\big|
    \]
    where $r_d^2 = r^2 - (d-z_{k,n})^2$ and $\Tilde{r}^2_d = r^2 - (d- \Tilde{z}_{k,n})^2$. For very small $\mu_k, \lambda^*_k$ we have 
    $\xi(d) \leq C_2 \sigma (r^{n-2}_d + \Tilde{r}^{n-2}_d) |r_d - \Tilde{r}_d| \sim C_2 \sigma r^{n-2} l_k$. Then we have, 
    $$ \sigma r^{n-1} l_k \leq C_2(\mu_k + \lambda^*_k)\sigma r^{n-2} l_k, $$
    which further implies that $r \leq C (\mu_k + \lambda^*_k)$. This is a contradiction, hence our claim is proved.

\end{proof}

\begin{lem}\label{lem: J4aNontran}
     Suppose $\HP_0$ is a critical hyperplane such that it is non-transversal plane and assumption (J4a) holds. Then there exists an interior touching point on $\partial \Omega \cap R^+_0(\partial \Omega)$.
\end{lem}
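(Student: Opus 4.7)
My plan is to argue by contradiction: assuming $\HP_0$ is a non-transversal critical plane under (J4a), I will exhibit a point of $\partial\Omega\cap R^+_0(\partial\Omega)$, i.e., an interior touching point. Since such a touching point is incompatible with the non-transversal definition, the lemma's conclusion (existence of the touching point) will drive the later moving-plane dichotomy: the non-transversal case for (J4a) collapses onto the transversal case.

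First I will extract a candidate touching point via Lemma~\ref{lem:3lambda}. From the critical plane's defining sequence $\lambda_k\to 0^+$ with $\tilde x_k\in R^+_{\lambda_k}(\overline{\Omega})\setminus\Omega$, Lemma~\ref{lem:3lambda} yields boundary points $z_k\sim\tilde z_k$ with $\tilde z_k=R_{\lambda_k^*}(z_k)$, $\lambda_k^*\in(0,2\lambda_k)$, and the vertical control $|z_{k,n}|,|\tilde z_{k,n}|\le 3\tilde x_{k,n}$. Since $z_k\sim\tilde z_k$, Theorem~\ref{thm:monotone_implies_equality} gives $H^J(z_k)=H^J(\tilde z_k)$, and the ball-symmetric-difference computation from Lemma~\ref{lem:interior_constant_kernel} applied at scale $\lambda_k^*$ produces the mass balance
\[
    \bigl|(\Omega\setminus R_{\lambda_k^*}(\Omega))\cap(B_r(\tilde z_k)\setminus B_r(z_k))\bigr|
    = \bigl|(R_{\lambda_k^*}(\Omega)\setminus\Omega)\cap(B_r(z_k)\setminus B_r(\tilde z_k))\bigr|.
\]
By compactness of $\overline{\Omega}$, I pass to subsequential limits $z_k\to z_0\in\partial\Omega$ and $\tilde z_k\to\tilde z_0\in\partial\Omega$ with $\tilde z_0=R_0(z_0)$. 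If $z_{0,n}<0$, then $\tilde z_0\in R^+_0(\partial\Omega)\cap\partial\Omega$ is exactly the desired interior touching point, and I am done.

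The main obstacle is therefore ruling out the degenerate possibility $z_0=\tilde z_0\in\HP_0\cap\partial\Omega$. Here I will combine Lemma~\ref{lem:non_trans} with the nondegeneracy condition $(\Omega 4)$. Nondegeneracy gives $\mathcal H^{n-1}(\partial B_r(z_0)\cap\Omega)>0$, and since $\partial B_r(z_0)\cap\HP_0$ is $\mathcal H^{n-1}$-negligible on the sphere, there is $y\in\Omega\cap\HP^+_0\cap\partial B_r(z_0)$. I will interpret Lemma~\ref{lem:non_trans} with the reference origin placed at $z_0$ (a horizontal shift preserving $\HP_0$ and the critical plane structure), forcing $y\in R_0(\overline{\Omega})$ and hence $R_0(y)\in\overline{\Omega}\cap\HP^-_0$. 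To close the argument, I will replay the Cavalieri-style estimate from the proof of Lemma~\ref{lem:non_trans} on the strip between $B_r(z_k)$ and $B_r(\tilde z_k)$: the mass balance above, together with the positive-measure set produced by $(\Omega 4)$, yields a width bound of the form $r\le C\lambda_k^*$, which fails as $k\to\infty$. The crucial point, and the reason (J4a) is needed rather than a general smooth kernel, is that the characteristic kernel turns the equal-curvature identity into an exact identity of ball-symmetric-difference volumes, which is what makes the Cavalieri estimate sharp enough to close the contradiction.
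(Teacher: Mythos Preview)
Your plan has a real gap in the closing step. First, a small but telling slip: the displayed ``mass balance''
\[
\bigl|(\Omega\setminus R_{\lambda_k^*}(\Omega))\cap(B_r(\tilde z_k)\setminus B_r(z_k))\bigr|
=\bigl|(R_{\lambda_k^*}(\Omega)\setminus\Omega)\cap(B_r(z_k)\setminus B_r(\tilde z_k))\bigr|
\]
is tautological --- the right-hand set is just the $R_{\lambda_k^*}$-reflection of the left-hand set. The identity actually coming from $H^J(z_k)=H^J(\tilde z_k)$ under (J4a) is
\[
\bigl|(\Omega\setminus R_{\lambda_k^*}(\Omega))\cap(B_r(z_k)\setminus B_r(\tilde z_k))\bigr|
=\bigl|(\Omega\setminus R_{\lambda_k^*}(\Omega))\cap(B_r(\tilde z_k)\setminus B_r(z_k))\bigr|,
\]
which is the one driving the Cavalieri estimate in Lemma~\ref{lem:non_trans}. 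More importantly, under the non-transversal hypothesis Lemma~3.2 forces $\tilde x_{k,n}\to 0$, hence $z_{k,n},\tilde z_{k,n}\to 0$ by Lemma~\ref{lem:3lambda}; so your ``case $z_{0,n}<0$'' never occurs, and the whole argument lives in what you call the degenerate case $z_0\in\HP_0\cap\partial\Omega$.

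The substantive problem is your proposed use of $y$. The Cavalieri contradiction in Lemma~\ref{lem:non_trans} is fed by a cylinder $Q(x_0)$ with $x_0\in\Omega\cap\HP_0^+\cap\partial B_r(z_0)\setminus R_0(\overline{\Omega})$: it is precisely $x_0\notin R_0(\overline{\Omega})$ that guarantees $Q(x_0)\subseteq\Omega\setminus R_{\lambda_k^*}(\Omega)$ for large $k$, producing the lower bound $|S_k|\gtrsim r^{n-1}(\tilde z_{k,n}-z_{k,n})$ on the \emph{upper} crescent side. After invoking Lemma~\ref{lem:non_trans} you know that every such $y$ lies \emph{inside} $R_0(\overline{\Omega})$, so a cylinder about $y$ contributes nothing to $\Omega\setminus R_{\lambda_k^*}(\Omega)$ in the upper crescent, and your ``replay'' has no fuel. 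Knowing $R_0(y)\in\overline{\Omega}\cap\HP_0^-$ does not help either, since on the lower crescent you only have the coarse upper bound, not a lower one. In short, once Lemma~\ref{lem:non_trans} has been applied, the Cavalieri mechanism cannot be rerun; you need a different argument to extract a contradiction from $(\Omega4)$.

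The paper does not replay Cavalieri at this point. Instead it uses Lemma~\ref{lem:non_trans} (with the origin taken to be the limit point $z_0\in\HP_0\cap\partial\Omega$) as a black box and then runs a short topological case split on the position of $\partial B_r(0)$ relative to $\Omega^+$ and $R_0^+(\overline{\Omega})$: either $\Omega\subseteq B_r(0)$, which violates $(\Omega4)$ at $0\in\partial\Omega$; or $\partial\Omega\cap\HP_0^+$ meets $\partial B_r(0)$, which (combined with $R_0^+(\overline{\Omega})\subseteq\Omega$ and the absence of touching points) produces a point of $\Omega\cap\partial B_r(0)\cap\HP_0^+\setminus R_0(\overline{\Omega})$, contradicting Lemma~\ref{lem:non_trans}; or $\partial B_r(0)\cap\HP_0^+\subseteq\partial\Omega$, which is incompatible with $\tilde z_k\to 0$ along $\partial\Omega$. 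You should replace your ``replay'' paragraph with an argument of this flavor: the nondegeneracy condition is used to rule out $\Omega\subseteq B_r(z_0)$, and connectedness is used to force $\partial\Omega$ to cross the sphere $\partial B_r(z_0)$, which then manufactures the forbidden point outside $R_0(\overline{\Omega})$.
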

\begin{proof}
    By the previous lemma, we have $\Omega \cap \partial B_r(0) \setminus R_0(\overline{\Omega}) = \emptyset$, and hence $$\Omega \cap \partial B_r(0) \setminus R^+_0(\overline{\Omega}) = \emptyset.$$ 
    Since $\HP_0$ is a non-transversal intersection plane, we also have $R^+_0(\overline{\Omega}) \subseteq \Omega$. Then following situations can not happen,
    \begin{itemize}
        \item If $\Omega^+, R^+_0(\overline{\Omega}) \subseteq B_r(0)$, we notice that $\Omega \subseteq B_r(0)$, $\H^{n-1}(\Omega \cap \partial B_r(0)) = 0$. But that would contradict the non-degeneracy condition.
        \item $\partial \Omega \cap \HP^+_0$ can not intersect $\partial B_r(0)$. If it intersects, since $\Omega$ is an open set, we have a $x_0 \in (\Omega \setminus R^+_0(\overline{\Omega})) \cap \partial B_r(0)$ which contradicts our previous lemma. We also see that interior touching can't happen due to the fact $R^+_0(\Omega) \subset \Omega$.
        \item If $\partial B_r(0) \cap \HP^+_0 \subseteq \partial \Omega \cap \HP^+_0$ then we have $\partial B_r(0) \cap \HP^+_0 \subseteq R^+_0(\partial \Omega)$ because of the earlier results. But then we can't have a sequence $\Tilde{z}_k \in \partial \Omega \cap \HP^+_0$ such that $\Tilde{z}_k \to 0 \in \partial \Omega$.
    \end{itemize}
\end{proof}

{
\begin{lem}\label{lem: J4bNontran}
    Suppose $\HP_0$ is a nontransversal plane and assumption (J4b) holds. {In addition to that, $\Omega$ satisfies assumption \ref{assum_domain}}. Then,
    $$|B_r \cap (\Omega \setminus R_0(\overline{\Omega}))|=0,$$ and there exists an interior touching point on $\partial \Omega \cap R^+_0(\partial \Omega)$
\end{lem}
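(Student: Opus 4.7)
The plan is to adapt Lemma~\ref{lem:non_trans} to the strictly decreasing kernel setting (J4b). The strict monotonicity of $\mu$ lets us upgrade the (J4a) conclusion ``empty sphere intersection'' to the measure-theoretic statement $|B_r\cap(\Omega\setminus R_0(\overline{\Omega}))|=0$. Once this is in hand, the existence of an interior touching point follows from essentially the same three-case analysis as in Lemma~\ref{lem: J4aNontran}.

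\textbf{Setup and curvature identity.}
I would argue by contradiction: suppose $|B_r\cap(\Omega\setminus R_0(\overline{\Omega}))|>0$. The nontransversal assumption $R^+_0(\overline{\Omega})\subseteq\Omega$ forces every point of $\Omega\setminus R_0(\overline{\Omega})$ to lie in $\HP^+_0$, so openness of $B_r\setminus R_0(\overline{\Omega})$ yields a cylinder $Q(x_0)=B'_\epsilon(\widehat{x}_0)\times(x_{0,n}-l_0,x_{0,n}+l_0)\subseteq B_r\cap\Omega\cap\HP^+_0\setminus R_0(\overline{\Omega})$. From the nontransversal sequence $\lambda_k\to 0$, Lemma~\ref{lem:3lambda} produces $\lambda_k^*\to 0$, $z_k\in\partial\Omega\cap\HP^-_{\lambda_k^*}$ and $\tilde z_k=R_{\lambda_k^*}(z_k)\in\partial\Omega\cap\HP^+_{\lambda_k^*}$ with $z_k\sim\tilde z_k$ and $z_k,\tilde z_k\to 0$. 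Theorem~\ref{thm:monotone_implies_equality} then gives $H^J(z_k)=H^J(\tilde z_k)$, and repeating the expansion used in the (J4b)-interior-touching lemma preceding Lemma~\ref{lem: interior}, together with the $R_{\lambda_k^*}$ change of variable, produces the identity
\begin{equation*}
\int_{A_k} g_k(y)\,dy = \int_{B_k} g_k(y)\,dy,
\end{equation*}
where $A_k:=(\Omega\setminus R_{\lambda_k^*}(\Omega))\cap\HP^-_{\lambda_k^*}$, $B_k:=(R_{\lambda_k^*}(\Omega)\setminus\Omega)\cap\HP^-_{\lambda_k^*}$, and $g_k(y):=J(z_k-y)-J(z_k-R_{\lambda_k^*}(y))\ge 0$ on $\HP^-_{\lambda_k^*}$.

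\textbf{Rate-matching contradiction.}
Since $x_0\notin R_0(\overline{\Omega})$ and the latter is closed, for $k$ large the reflected cylinder $R_{\lambda_k^*}(Q(x_0))$ sits inside $B_k\cap B_r(z_k)\cap\HP^-_{\lambda_k^*}$. Using the $C^1$ regularity and strict decrease of $\mu$ from (J4b), a direct Taylor expansion shows $g_k\ge c\,\lambda_k^*$ on this reflected cylinder, with $c>0$ independent of $k$, because the two distances $|z_k-y|$ and $|z_k-R_{\lambda_k^*}(y)|$ differ by order $\lambda_k^*$ at points strictly inside $\spt(\mu)$ (shrinking $Q(x_0)$ if needed so $|x|$ stays bounded away from $0$ and $r$). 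This yields $\int_{B_k}g_k\gtrsim c\,\lambda_k^*\,|Q(x_0)|$. On the other hand, the nontransversal hypothesis forces $\overline{\Omega^-_0}\subseteq R_0(\overline{\Omega^+_0})$, hence $|\Omega^-_0\setminus R_0(\Omega)|=0$; a standard thin-slab estimate then gives $|A_k|=O(\lambda_k^*)$, and combined with the uniform bound $g_k\le C\lambda_k^*$ this produces $\int_{A_k}g_k\le C'(\lambda_k^*)^2$. The identity $\int_{A_k}g_k=\int_{B_k}g_k$ then forces $c\,\lambda_k^*\le C'(\lambda_k^*)^2$, contradicting $\lambda_k^*\to 0^+$.

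\textbf{Main obstacle and conclusion.}
The key technical obstacle is the rate-matching step: one must use the $C^1$-smoothness of $\mu$ from (J4b) to extract a linear-in-$\lambda_k^*$ \emph{lower} bound for $g_k$ on the reflected cylinder, and one must use the nontransversal structure to pin down $|A_k|=O(\lambda_k^*)$. With $|B_r\cap(\Omega\setminus R_0(\overline{\Omega}))|=0$ in hand, the existence of an interior touching point follows exactly as in Lemma~\ref{lem: J4aNontran} by excluding the three analogous scenarios for $\partial\Omega\cap\HP^+_0$ and $\partial B_r(0)\cap\HP^+_0$, the second of which is now ruled out by the stronger measure-zero statement just established rather than by the (J4a) sphere-boundary lemma.
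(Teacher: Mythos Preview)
Your approach is genuinely different from the paper's, and it has a gap in the stated rates, although the underlying idea can be repaired.

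\textbf{What the paper does.} The paper does \emph{not} set up a two-sided rate comparison. Instead it divides the curvature identity by $|z_k-\tilde z_k|$ and applies the mean value theorem directly:
\[
0=\frac{H^J(\tilde z_k)-H^J(z_k)}{|z_k-\tilde z_k|}
=2\int_{\Omega\setminus R_{\lambda_k^*}(\Omega)}\mu'(|y-\xi_k|)\,\frac{y_n-\xi_{k,n}}{|y-\xi_k|}\cdot(-1)\,dy,
\]
with $\xi_k$ on the segment $[z_k,\tilde z_k]$. Since $\xi_k\to 0$ and $\chi_{\Omega\setminus R_{\lambda_k^*}(\Omega)}\to\chi_{\Omega\setminus R_0(\Omega)}$ a.e., dominated convergence (the integrand is bounded by $\sup|\mu'|$) yields
\[
0=2\int_{\Omega\setminus R_0(\Omega)}\bigl(-\mu'(|y|)\bigr)\frac{y_n}{|y|}\,dy.
\]
On $\Omega\setminus R_0(\overline\Omega)\subseteq\HP_0^+$ one has $y_n>0$, and (J4b) gives $-\mu'>0$ on $(0,r)$, so the integrand is strictly positive on $B_r\cap(\Omega\setminus R_0(\overline\Omega))$; hence that set has measure zero. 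No contradiction, no cylinder, no slab estimates.

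\textbf{The gap in your argument.} Your ``standard thin-slab estimate'' $|A_k|=O(\lambda_k^*)$ is not available under Assumption~\ref{assum_domain}. From nontransversality you only know that $y\in A_k\cap\HP_0^-$ forces $R_0(y)\in\Omega$ while $R_{\lambda_k^*}(y)\notin\Omega$, so $R_0(y)$ lies within vertical distance $2\lambda_k^*$ of $\partial\Omega$; but with $\pinv_{\overline\Omega}(\xh)$ merely a \emph{countable} union of intervals, the inner tube of width $2\lambda_k^*$ need not have measure $O(\lambda_k^*)$. What you do get is $|A_k|\to 0$ (dominated convergence on $\chi_{A_k}$). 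Separately, the natural small parameter in $g_k$ is $l_k:=|z_k-\tilde z_k|$, not $\lambda_k^*$: the mean value theorem gives $|g_k|\le Cl_k$ globally and $g_k\ge cl_k$ on $R_{\lambda_k^*}(Q(x_0))$. With these two corrections your identity $\int_{A_k}g_k=\int_{B_k}g_k$ yields $c|Q(x_0)|\le C|A_k|\to 0$, which is still a contradiction --- so your route can be salvaged, but as written the $O(\lambda_k^*)$ step is unjustified, and the paper's direct limit argument avoids the issue entirely.
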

}

\begin{proof}
Following the same steps as in the case of the constant kernel, we get 
    \begin{align*}
        0 &= H^J(\Tilde{z}_k) - H^J(z_k)= H^J(R_{\lambda^*_k}(z_k)) - H^J(z_k)\\
        &=2 \int_{\Omega \setminus R_{\lambda^*_k}(\Omega)}J(|y-z_k|) dy - 2 \int_{ R_{\lambda^*_k}(\Omega)  \setminus \Omega}J(|y-z_k|) dy \\
        &= 2 \int_{\Omega \setminus R_{\lambda^*_k}(\Omega)} \big( J(|y-z_k|) - J(|y-\tilde{z}_k|) \big) dy  \, \\
        &= 2 \int_{\Omega \setminus R_{\lambda^*_k}(\Omega)} \nabla J(|y-\xi_k|) \cdot (z_k - \tilde{z}_k) dy \quad [\text{Using mean value theorem}] \\
        &= 2 \int_{\Omega \setminus R_{\lambda^*_k}(\Omega)} \mu'(|y-\xi_k|) \frac{y-\xi_k}{|y-\xi_k|} \cdot (z_k - \tilde{z}_k) dy
    \end{align*}
    where $\xi_k = \tilde{z}_k + t(z_k - \tilde{z}_k)$ for some $0<t<1$. Then we have 
    \begin{align*}
        0 &= \frac{H^J(\Tilde{z}_k) - H^J(z_k)}{|z_k - \tilde{z}_k|} = 2 \int_{\Omega \setminus R_{\lambda^*_k}(\Omega)} \mu'(|y-\xi_k|) \frac{y-\xi_k}{|y-\xi_k|} \cdot \frac{(z_k - \tilde{z}_k)}{|z_k - \tilde{z}_k|} dy.
    \end{align*}
 Next, we observe that 
 $$(y-\xi_k) \cdot (z_k - \tilde{z}_k) = (y_n - \xi_{k,n})(z_{k,n}- \tilde{z}_{k,n}), \quad  |z_k - \tilde{z}_k| = |z_{k,n} - \tilde{z}_{k,n}|,  \quad  \frac{(z_{k,n}- \tilde{z}_{k,n})}{|z_{k,n} - \tilde{z}_{k,n}|} = -1.$$ Using the Dominated Convergence Theorem, we get 
    \begin{align*}
        \lim_{k \to 0} \frac{H^J(\Tilde{z}_k) - H^J(z_k)}{|z_k - \tilde{z}_k|} &=-2 \lim_{k \to 0} \int_{\Omega \setminus R_{\lambda^*_k}(\Omega)} \mu'(|y-\xi_k|) \frac{(y_n - \xi_{k,n})}{|y-\xi_k|}  dy \\
        &= 2 \int_{\Omega \setminus R_0(\Omega)} -\mu'(|y|) \frac{(y_n)}{|y|}  dy,
    \end{align*}    
    since $\mu' <0$ almost everywhere in $B_r$ and $y_n >0$ in $\Omega \setminus R_0(\overline{\Omega})$. Hence, we have 
    $$|B_r \cap (\Omega \setminus R_0(\overline{\Omega}))|=0.$$
    This is sufficient to conclude that we have an interior point.
    
\end{proof}

We collect the results above into a proof for Theorem~\ref{thm:main}.
\begin{proof}
    Assumption~\ref{assum_domain} allows us to apply Theorem~\ref{thm:monotone_implies_equality} to conclude that if $\ux,\ox\in\partial\Omega$ and $\ux\sim\ox$, then $H^J(\ux)=H^J(\ox)$. Suppose $\HP_{\lambda_0}$ is a critical hyperplane. Lemmas~\ref{lem: J4aNontran} and~\ref{lem: J4bNontran} imply there exists a tounching point $\ox\in \HP_{\lambda_0}^+\cap\partial\Omega$ such that $\ux=R_{\lambda_0}(\ox)\in\HP_{\lambda_0}^-\cap\partial\Omega$. The symmetry of $\Omega$ across $\HP_{\lambda_0}$ follows from Lemma~\ref{lem: interior}.
\end{proof}


\end{document}